\newtheorem{prop}{Proposition}[section]
\newtheorem{thm}[prop]{Theorem}
\theoremstyle{definition}
\newcommand{\N}{\mathbb{N}}
\newcommand{\Z}{\mathbb{Z}}
\newcommand{\R}{\mathbb{R}}
\newcommand{\C}{\mathbb{C}}
\newcommand{\sspace}{\cdot}
\newcommand{\ssspace}{\cdot\cdot}
\DeclareMathOperator{\im}{i}
\DeclareMathOperator{\imm}{im}
\DeclareMathOperator{\de}{d}
\newcommand{\del}{\partial}
\newcommand{\delbar}{\overline{\del}}
\title{On Bott-Chern cohomology of compact complex surfaces}
\author{Daniele Angella}
\address[Daniele Angella]{Istituto Nazionale di Alta Matematica}
\curraddr{Dipartimento di Matematica e Informatica\\
Universit\`{a} di Parma\\
Parco Area delle Scienze 53/A, 43124\\
Parma, Italy}
\email{daniele.angella@math.unipr.it}
\author{Georges Dloussky}
\address[Georges Dloussky]{Aix-Marseille University, CNRS, Centrale Marseille, I2M, UMR 7373, 13453\\
39 rue F. Joliot-Curie 13411\\
Marseille Cedex 13, France}
\email{georges.dloussky@univ-amu.fr}
\author{Adriano Tomassini}
\address[Adriano Tomassini]{Dipartimento di Matematica e Informatica\\
Universit\`{a} di Parma\\
Parco Area delle Scienze 53/A, 43124\\
Parma, Italy}
\email{adriano.tomassini@unipr.it}
\keywords{compact complex surfaces, Bott-Chern cohomology, class VII, solvmanifold}
\thanks{This work was supported by the Project PRIN ``Varietà reali e complesse: geometria, topologia e analisi armonica'', by the Projects FIRB ``Geometria Differenziale Complessa e Dinamica Olomorfa'' and ``Geometria Differenziale e Teoria Geometrica delle Funzioni'', by GNSAGA of INdAM, and by ANR ``Méthodes nouvelles en géométrie non kählerienne''.\\\\
\indent To appear in {\em Annali di Matematica Pura ed Applicata}.
The final publication is available at Springer via \url{http://dx.doi.org/10.1007/s10231-014-0458-7}.
}
\subjclass[2010]{32C35, 57T15, 32J15}
\begin{document}

\begin{abstract}
We study Bott-Chern cohomology on compact complex non-K\"ahler surfaces. In particular, we compute such a cohomology for compact complex surfaces in class $\text{VII}$ and for compact complex surfaces diffeomorphic to solvmanifolds.
\end{abstract}

\maketitle

\section*{Introduction}
For a given complex manifold $X$, many cohomological invariants can be defined, and many are known for compact complex surfaces.

Among these, one can consider {\em Bott-Chern and Aeppli cohomologies}. They are defined as follows:
$$ H^{\bullet,\bullet}_{BC}(X) \;:=\; \frac{\ker\del\cap\ker\delbar}{\imm\del\delbar} \qquad \text{ and } \qquad H^{\bullet,\bullet}_{A}(X) \;:=\; \frac{\ker\del\delbar}{\imm\del+\imm\delbar} \;. $$

Note that the identity induces natural maps
$$
\xymatrix{
 & H^{\bullet,\bullet}_{BC}(X) \ar[ld] \ar[d] \ar[rd] & \\
H^{\bullet,\bullet}_{\delbar}(X) \ar[rd] & H^{\bullet}_{dR}(X;\C) \ar[d] & H^{\bullet,\bullet}_{\del}(X) \ar[ld] \\
& H^{\bullet,\bullet}_{A}(X) &
}
$$
where $H^{\bullet,\bullet}_{\delbar}(X)$ denotes the Dolbeault cohomology and $H^{\bullet,\bullet}_{\del}(X)$ its conjugate, and the maps are morphisms of (graded or bi-graded) vector spaces. For compact K\"ahler manifolds, the natural map $\bigoplus_{p+q=\bullet} H^{p,q}_{BC}(X) \to H^{\bullet}_{dR}(X;\C)$ is an isomorphism.

Assume that $X$ is compact. The Bott-Chern and Aeppli cohomologies are isomorphic to the kernel of suitable $4$th-order differential elliptic operators, see \cite[\S2.b, \S2.c]{schweitzer}. In particular, they are finite-dimensional vector spaces. In fact, fixed a Hermitian metric $g$, its associated $\C$-linear Hodge-$*$-operator induces the isomorphism
$$ H^{p,q}_{BC}(X) \stackrel{\simeq}{\to} H^{n-q,n-p}_{A}(X) \;, $$
for any $p,q\in\{0,\ldots,n\}$, where $n$ denotes the complex dimension of $X$. In particular, for any $p,q\in\{0,\ldots,n\}$, one has
$$ \dim_\C H^{p,q}_{BC}(X) \;=\; \dim_\C H^{q,p}_{BC}(X) \;=\; \dim_\C H^{n-p,n-q}_{A}(X) \;=\; \dim_\C H^{n-q,n-p}_{A}(X) \;. $$

For the Dolbeault cohomology, the Fr\"olicher inequality relates the Hodge numbers and the Betti numbers: for any $k\in\{0,\ldots,2n\}$,
$$ \sum_{p+q=k} \dim_\C H^{p,q}_{\delbar}(X) \;\geq\; \dim_\C H^k_{dR}(X;\C) \;. $$
Similarly, for Bott-Chern cohomology, the following inequality {\itshape à la} Fr\"olicher has been proven in \cite[Theorem A]{angella-tomassini-3}: for any $k\in\{0,\ldots,n\}$,
$$ \sum_{p+q=k} \left( \dim_\C H^{p,q}_{BC}(X) + \dim_\C H^{p,q}_{A}(X) \right) \;\geq\; 2\, \dim_\C H^k_{dR}(X;\C) \;. $$

The equality in the Fr\"olicher inequality characterizes the degeneration of the Fr\"olicher spectral sequence at the first level. This always happens for compact complex surfaces. On the other side, in \cite[Theorem B]{angella-tomassini-3}, it is proven that the equality in the inequality {\itshape à la} Fr\"olicher for the Bott-Chern cohomology characterizes the validity of the {\em $\del\delbar$-Lemma}, namely, the property that every $\del$-closed $\delbar$-closed $\de$-exact form is $\del\delbar$-exact too, \cite{deligne-griffiths-morgan-sullivan}. The validity of the $\del\delbar$-Lemma implies that the first Betti number is even, which is equivalent to K\"ahlerness for compact complex surfaces.
Therefore the positive integer numbers
$$ \Delta^k \;:=\; \sum_{p+q=k} \left( \dim_\C H^{p,q}_{BC}(X) + \dim_\C H^{p,q}_{A}(X) \right) - 2\, b_k \;\in\; \N \;, $$
varying $k\in\{1,2\}$, measure the non-K\"ahlerness of compact complex surfaces $X$.

\medskip

Compact complex surfaces are divided in seven classes, according to the Kodaira and Enriques classification, see, e.g., \cite{barth-hulek-peters-vandeven}.
In this note, we compute Bott-Chern cohomology for some classes of compact complex (non-K\"ahler) surfaces. In particular, we are interested in studying the relations between Bott-Chern cohomology and de Rham cohomology, looking at the injectivity of the natural map $H^{2,1}_{BC}(X) \to H^{3}_{dR}(X;\C)$. This can be intended as a weak version of the $\del\delbar$-Lemma, compare also \cite{fu-yau}.

More precisely, we start by proving that the non-K\"ahlerness for compact complex surfaces is encoded only in $\Delta^2$, namely, $\Delta^1$ is always zero. This gives a partial answer to a question by T.~C. Dinh to the third author.

\smallskip
\noindent{\bfseries Theorem \ref{thm:delta1=0}.}
{\itshape Let $X$ be a compact complex surface. Then:
 \begin{enumerate}[(i)]
  \item the natural map $H^{2,1}_{BC}(X) \to H^{2,1}_{\delbar}(X)$ induced by the identity is injective;
  \item $\Delta^1 = 0$.
 \end{enumerate}
In particular, the non-K\"ahlerness of $X$ is measured by just $\Delta^2\in\N$.
}
\smallskip

For compact complex surfaces in class $\text{VII}$, we show the following result, where we denote $h^{p,q}_{BC}:=\dim_\C H^{p,q}_{BC}(X)$ for $p,q\in\{0,1,2\}$.

\smallskip
\noindent{\bfseries Theorem \ref{thm:bc-vii}.}
{\itshape The Bott-Chern numbers of compact complex surfaces in class $\text{VII}$ are:
 $$
 \begin{array}{ccccc}
     &  & h^{0,0}_{BC}=1 &  &  \\
     & h^{1,0}_{BC}=0 &  & h^{0,1}_{BC}=0 &  \\
    h^{2,0}_{BC}=0 &  & h^{1,1}_{BC}=b_2+1 &  & h^{0,2}_{BC}=0 \\
     & h^{2,1}_{BC}=1 &  & h^{1,2}_{BC}=1 &  \\
     &  & \phantom{\;.} h^{2,2}_{BC}=1 \;. &  & 
 \end{array}
 $$
}
\smallskip

According to Theorem \ref{thm:delta1=0}, the natural map $H^{2,1}_{BC}(X) \to H^{2,1}_{\delbar}(X)$ is injective for any compact complex surface. One is then interested in studying the injectivity of the natural map $H^{2,1}_{BC}(X) \to H^{3}_{dR}(X;\C)$ induced by the identity, at least for compact complex surfaces diffeomorphic to solvmanifolds. In fact, by definition, the property of satisfying the $\del\delbar$-Lemma, \cite{deligne-griffiths-morgan-sullivan}, is equivalent to the natural map $\bigoplus_{p+q=\bullet} H^{p,q}_{BC}(X) \to H^{\bullet}_{dR}(X;\C)$ being injective. Note that, for a compact complex manifold of complex dimension $n$, the injectivity of the map $H^{n,n-1}_{BC}(X)\to H^{2n-1}_{dR}(X;\C)$ implies the $(n-1,n)$-th weak $\del\delbar$-Lemma in the sense of J. Fu and S.-T. Yau, \cite[Definition 5]{fu-yau}.

We then compute the Bott-Chern cohomology for compact complex surfaces diffeomorphic to solvmanifolds, according to the list given by K. Hasegawa in \cite{hasegawa-jsg}, see Theorem \ref{thm:bc-solvmfds}. More precisely, we prove that the cohomologies can be computed by using just left-invariant forms. Furthermore, for such complex surfaces, we note that the natural map $H^{2,1}_{BC}(X)\to H^{3}_{dR}(X;\C)$ is injective, see Theorem \ref{thm:inj-solvmfds}.

We note that the above classes do not exhaust the set of compact complex non-K\"ahler surfaces, the cohomologies of elliptic surfaces being still unknown.

\medskip

\noindent{\sl Acknowledgments.} The first and third authors would like to thank the Aix-Marseille University for its warm hospitality. Many thanks are due to the referee for her/his suggestions that improved the presentation.
The third author would like to dedicate this paper to the memory of his mother.

\section{Non-K\"ahlerness of compact complex surfaces and Bott-Chern cohomology}

We recall that, for a compact complex manifold of complex dimension $n$, for $k\in\{0,\ldots, 2n\}$, we define the ``non-K\"ahlerness'' degrees, \cite[Theorem A]{angella-tomassini-3},
$$ \Delta^k \;:=\; \sum_{p+q=k} \left( h^{p,q}_{BC} + h^{n-q,n-p}_{BC} \right) - 2\, b_k \in \N \;,. $$
where we use the duality in \cite[\S2.c]{schweitzer} giving $h^{p,q}_{BC}:=\dim_\C H^{p,q}_{BC}(X)=\dim_\C H^{n-q,n-p}_{A}(X)$.
According to \cite[Theorem B]{angella-tomassini-3}, $\Delta^k = 0$ for any $k\in\{0,\ldots,2n\}$ if and only if $X$ satisfies the $\del\delbar$-Lemma, namely, every $\del$-closed $\delbar$-closed $\de$-exact form is $\del\delbar$-exact too. In particular, for a compact complex surface $X$, the condition $\Delta^1=\Delta^2=0$ is equivalent to $X$ being K\"ahler, the first Betti number being even, \cite{kodaira-1, miyaoka, siu}, see also \cite[Corollaire 5.7]{lamari}, and \cite[Theorem 11]{buchdahl}.

\medskip

We prove that $\Delta^1$ is always zero for any compact complex surface. In particular, a sufficient and necessary condition for compact complex surfaces to be K\"ahler is $\Delta^2=0$.

\begin{thm}\label{thm:delta1=0}
 Let $X$ be a compact complex surface. Then:
 \begin{enumerate}[(i)]
  \item the natural map $H^{2,1}_{BC}(X) \to H^{2,1}_{\delbar}(X)$ induced by the identity is injective;
  \item $\Delta^1 = 0$.
 \end{enumerate}
In particular, the non-K\"ahlerness of $X$ is measured by just $\Delta^2\in\N$.
\end{thm}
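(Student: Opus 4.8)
The plan is to deduce (ii) from (i) by squeezing $\Delta^1$ between $0$ and $0$, treating (i) as a black box; the real work lies in (i). Writing $h^{p,q}_{\delbar}:=\dim_\C H^{p,q}_{\delbar}(X)$, for $n=2$ and $k=1$ the definition reads $\Delta^1 = h^{1,0}_{BC}+h^{2,1}_{BC}+h^{0,1}_{BC}+h^{1,2}_{BC}-2b_1$, and the inequality \emph{\`a la} Fr\"olicher of \cite{angella-tomassini-3} already gives $\Delta^1\geq 0$, so it suffices to prove $\Delta^1\leq 0$ by bounding the four Bott-Chern numbers from above. Conjugation yields $h^{0,1}_{BC}=h^{1,0}_{BC}$ and $h^{1,2}_{BC}=h^{2,1}_{BC}$; since both the Bott-Chern and the Dolbeault coboundaries vanish in bidegree $(1,0)$, the inclusion $\ker\del\cap\ker\delbar\cap A^{1,0}\subseteq\ker\delbar\cap A^{1,0}$ gives $h^{1,0}_{BC}\leq h^{1,0}_{\delbar}$, while (i) gives $h^{2,1}_{BC}\leq h^{2,1}_{\delbar}$. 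Invoking Serre duality $h^{2,1}_{\delbar}=h^{0,1}_{\delbar}$ and the degeneration at the first level of the Fr\"olicher spectral sequence of a surface, i.e. $b_1=h^{1,0}_{\delbar}+h^{0,1}_{\delbar}$, these add up to $\Delta^1\leq 2h^{1,0}_{\delbar}+2h^{0,1}_{\delbar}-2b_1=0$. Hence $\Delta^1=0$, and every inequality used is forced to be an equality.

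The heart of the matter is (i). Since $A^{3,1}=0$, every $(2,1)$-form is $\del$-closed and $\imm\del\delbar\cap A^{2,1}=\del\delbar(A^{1,0})$, so $H^{2,1}_{BC}(X)=\left(\ker\delbar\cap A^{2,1}\right)/\del\delbar(A^{1,0})$ while $H^{2,1}_{\delbar}(X)=\left(\ker\delbar\cap A^{2,1}\right)/\delbar(A^{2,0})$, and the kernel of the natural map is $\delbar(A^{2,0})/\del\delbar(A^{1,0})$; thus (i) is exactly the inclusion $\delbar(A^{2,0})\subseteq\del\delbar(A^{1,0})$, the reverse being immediate from $\del\delbar=-\delbar\del$. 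I would recast this through the Serre-type duality of \cite{schweitzer}: the pairing $\int_X\cdot\wedge\cdot$ puts $H^{2,1}_{BC}(X)$ in duality with $H^{0,1}_{A}(X)$ and $H^{2,1}_{\delbar}(X)$ in duality with $H^{0,1}_{\delbar}(X)$, and makes the natural map $H^{2,1}_{BC}(X)\to H^{2,1}_{\delbar}(X)$ the transpose of $H^{0,1}_{\delbar}(X)\to H^{0,1}_{A}(X)$. As these two spaces share the coboundary $\delbar(A^{0,0})$ in bidegree $(0,1)$, the latter map is onto precisely when every $(0,1)$-form $\beta$ with $\del\delbar\beta=0$ is already $\delbar$-closed; this is the statement I would actually prove.

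To prove it, set $\psi:=\delbar\beta\in A^{0,2}$. Then $\delbar\psi=0$ automatically (as $A^{0,3}=0$) and $\del\psi=\del\delbar\beta=0$ by hypothesis, so $\psi$ is $\de$-closed; equivalently $\overline{\psi}\in A^{2,0}$ is a holomorphic two-form, whence $\de\overline{\psi}=0$. Consequently $\psi\wedge\overline{\psi}=\delbar\beta\wedge\overline{\psi}=\de(\beta\wedge\overline{\psi})$, since $\de\overline{\psi}=0$ and the remaining component $\del\beta\wedge\overline{\psi}$ lies in $A^{3,1}=0$; Stokes' theorem then forces $\int_X\psi\wedge\overline{\psi}=0$, and since $\psi\wedge\overline{\psi}$ is a nonnegative multiple of the volume form this yields $\psi=\delbar\beta=0$. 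The only genuine obstacle is this chain of reductions rather than any single computation: one must check carefully that the two natural maps are transpose under Schweitzer's duality and keep the signs straight in the pointwise positivity of $\psi\wedge\overline{\psi}$. Once (i) is in hand the deduction of (ii) is the elementary inequality-chase above, and the final assertion of the statement is immediate, since $\Delta^1=0$ leaves $\Delta^2$ as the sole measure of non-K\"ahlerness.
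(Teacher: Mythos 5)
Your proof is correct. Part (ii) coincides with the paper's own argument: the same squeeze combining $\Delta^1\geq0$, the inclusion $H^{1,0}_{BC}(X)\subseteq H^{1,0}_{\delbar}(X)$, part (i), Serre duality $h^{2,1}_{\delbar}=h^{0,1}_{\delbar}$, and $E_1$-degeneration of the Fr\"olicher spectral sequence. For part (i), however, you take a genuinely different route. The paper works directly on the $(2,1)$-form: writing $\alpha=\delbar\beta$ with $\beta\in\wedge^{2,0}X$, it applies the Hodge decomposition for the Dolbeault Laplacian to get $\beta=\beta_h+\delbar^*\lambda$, and then uses $\delbar^*=-*\del\,*$ together with the Weil identity (primitive $(2,0)$-forms are self-dual) to exhibit an explicit potential, $\alpha=\del\delbar(*\lambda)$. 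You instead dualize: under Schweitzer's non-degenerate pairing $H^{2,1}_{BC}(X)\times H^{0,1}_{A}(X)\to\C$ and Serre duality, injectivity of $H^{2,1}_{BC}(X)\to H^{2,1}_{\delbar}(X)$ becomes surjectivity of $H^{0,1}_{\delbar}(X)\to H^{0,1}_{A}(X)$, and since both quotients share the denominator $\delbar\left(\wedge^{0,0}X\right)\subseteq\ker\delbar$, your reformulation of that surjectivity as ``every $\del\delbar$-closed $(0,1)$-form is $\delbar$-closed'' is exact, not merely sufficient; you then prove it by Stokes together with the pointwise positivity of $\psi\wedge\overline{\psi}$ for $\psi\in\wedge^{0,2}X$. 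Both proofs ultimately rest on the Weil identity in bidegree $(2,0)$/$(0,2)$, but yours trades the Hodge decomposition for the duality theorem (so the elliptic input is hidden in Schweitzer's pairing rather than in $\overline{\square}$), produces no explicit $\del\delbar$-potential, and is a pleasant generalization of the classical argument that holomorphic $1$-forms on a compact surface are closed. The one step worth stating explicitly in a written version is the transpose identity $\int_X j(\alpha)\wedge\beta=\int_X\alpha\wedge\iota(\beta)$, which holds trivially at the level of representatives once both pairings are checked to be well defined.
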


\begin{proof}
\begin{enumerate}[(i)]
 \item  Let $\alpha\in\wedge^{2,1}X$ be such that $[\alpha]=0\in H^{2,1}_{\delbar}(X)$. Let $\beta\in\wedge^{2,0}X$ be such that $\alpha=\delbar\beta$. Fix a Hermitian metric $g$ on $X$, and consider the Hodge decomposition of $\beta$ with respect to the Dolbeault Laplacian $\overline{\square}$: let $\beta=\beta_h + \delbar^*\lambda$ where $\beta_h\in\wedge^{2,0}X \cap \ker\overline{\square}$, and $\lambda\in\wedge^{2,1}X$. Therefore we have
 $$ \alpha \;=\; \delbar\beta \;=\; \delbar\delbar^*\lambda \;=\; -\delbar* \underbrace{\left(\del*\lambda\right)}_{\in\wedge^{2,0}X} \;=\; -\delbar \left(\del*\lambda\right) \;=\; \del\delbar \left(*\lambda\right) \;, $$
 where we have used that any $(2,0)$-form is primitive and hence, by the Weil identity, is self-dual. In particular, $\alpha$ is $\del\delbar$-exact, so it induces a zero class in $H^{2,1}_{BC}(X)$.

 \item  On the one hand, note that
 \begin{eqnarray*}
  H^{1,0}_{BC}(X) &=& \frac{\ker\del \cap \ker\delbar \cap \wedge^{1,0}X}{\imm\del\delbar} \;=\; \ker\del \cap \ker\delbar \cap \wedge^{1,0}X \\[5pt]
  &\subseteq& \ker\delbar \cap \wedge^{1,0}X \;=\; \frac{\ker\delbar \cap \wedge^{1,0}X}{\imm\delbar} \;=\; H^{1,0}_{\delbar}(X) \;.
 \end{eqnarray*}
 It follows that
 $$ \dim_\C H^{0,1}_{BC}(X) \;=\; \dim_\C H^{1,0}_{BC}(X) \;\leq\; \dim_\C H^{1,0}_{\delbar}(X) \;=\; b_1 - \dim_\C H^{0,1}_{\delbar}(X) \;, $$
 where we use that the Fr\"olicher spectral sequence degenerates, hence in particular $b_1 = \dim_\C H^{1,0}_{\delbar}(X) + \dim_\C H^{0,1}_{\delbar}(X)$.

 On the other hand, by part (i), we have
 $$ \dim_\C H^{1,2}_{BC}(X) \;=\; \dim_\C H^{2,1}_{BC}(X) \;\leq\; \dim_\C H^{2,1}_{\delbar}(X) \;=\; \dim_\C H^{0,1}_{\delbar}(X) \;, $$
 where we use the Kodaira and Serre duality $H^{2,1}_{\delbar}(X) \simeq H^{1}(X;\Omega^2_X) \simeq H^{1}(X;\mathcal{O}_X) \simeq H^{0,1}_{\delbar}(X)$.

 By summing up, we get
 \begin{eqnarray*}
  \Delta^1 &=& \dim_\C H^{0,1}_{BC}(X) + \dim_\C H^{1,0}_{BC}(X) + \dim_\C H^{1,2}_{BC}(X) + \dim_\C H^{2,1}_{BC}(X) - 2\, b_1 \\[5pt]
  &\leq& 2\, \left( b_1 - \dim_\C H^{0,1}_{\delbar}(X) + \dim_\C H^{0,1}_{\delbar}(X) - b_1 \right) \;=\; 0 \;,
 \end{eqnarray*}
 concluding the proof.\qedhere
\end{enumerate}
\end{proof}

\section{Class VII surfaces}
In this section, we compute Bott-Chern cohomology for compact complex surfaces in class $\text{VII}$.

\medskip

Let $X$ be a compact complex surface.
By Theorem \ref{thm:delta1=0}, the natural map $H^{2,1}_{BC}(X) \to H^{2,1}_{\delbar}(X)$ is always injective. Consider now the case when $X$ is in class $\text{VII}$. If $X$ is minimal, we prove that the same holds for cohomology with values in a line bundle. We will also prove that the natural map $H^{1,2}_{BC}(X) \to H^{1,2}_{\delbar}(X)$ is not injective.

\begin{prop}
 Let $X$ be a compact complex surface in class $\text{VII}_0$. Let $L \in H^1(X;\C^*)=\mathrm{Pic}^0(X)$. The natural map $H^{2,1}_{BC}(X;L) \to H^{2,1}_{\delbar}(X;L)$ induced by the identity is injective.
\end{prop}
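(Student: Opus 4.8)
The plan is to follow the strategy of Theorem~\ref{thm:delta1=0}(i), but to confront the fact that its metric computation does not transfer verbatim to a non-unitary flat bundle; so after isolating the right reduction I would replace the self-duality step by a vanishing statement. Write $\del_L,\delbar_L$ for the $(1,0)$- and $(0,1)$-parts of the flat connection attached to $L\in\mathrm{Pic}^0(X)=H^1(X;\C^*)$, so that $\del_L^2=\delbar_L^2=\del_L\delbar_L+\delbar_L\del_L=0$ and $H^{\bullet,\bullet}_{BC}(X;L)$, $H^{\bullet,\bullet}_{\delbar}(X;L)$ are defined by these operators on $L$-valued forms. Since $X$ is a surface, a $(2,1)$-form is automatically $\del_L$-closed, so by the same bidegree bookkeeping as in Theorem~\ref{thm:delta1=0}(i) the kernel of $H^{2,1}_{BC}(X;L)\to H^{2,1}_{\delbar}(X;L)$ equals $\delbar_L\!\left(\wedge^{2,0}(X;L)\right)\big/\imm(\del_L\delbar_L)$ (note $\imm(\del_L\delbar_L)\subseteq\delbar_L(\wedge^{2,0}(X;L))$). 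Hence injectivity is equivalent to the equality $\imm\delbar_L\cap\wedge^{2,1}(X;L)=\imm(\del_L\delbar_L)$.

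Next I would run Hodge theory for the elliptic complex $(\wedge^{\bullet,0}(X;L),\del_L)$, after fixing Hermitian metrics on $X$ and on $L$. Because $\wedge^{3,0}(X;L)=0$, the Hodge decomposition for the Laplacian $\square_{\del_L}=\del_L\del_L^*+\del_L^*\del_L$ collapses to $\wedge^{2,0}(X;L)=\del_L\!\left(\wedge^{1,0}(X;L)\right)\oplus\mathcal H^{2,0}_{\del_L}(X;L)$. Applying $\delbar_L$ and using $\delbar_L\del_L=-\del_L\delbar_L$ gives $\imm\delbar_L\cap\wedge^{2,1}(X;L)=\imm(\del_L\delbar_L)+\delbar_L\!\left(\mathcal H^{2,0}_{\del_L}(X;L)\right)$. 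Thus the proposition reduces to showing that every $\del_L$-harmonic $L$-valued $(2,0)$-form is $\delbar_L$-closed, and the cleanest way to secure this is the vanishing $\mathcal H^{2,0}_{\del_L}(X;L)=0$. This is the exact point at which Theorem~\ref{thm:delta1=0}(i) instead invoked $\delbar^*=-*\del*$ together with the self-duality of $(2,0)$-forms; that computation forces the Chern $(1,0)$-operator to coincide with $\del_L$, i.e. a flat Hermitian metric, which a non-unitary $L$ need not admit, so a genuinely different input is required here.

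Finally, to prove $\mathcal H^{2,0}_{\del_L}(X;L)=0$ I would pass to the conjugate flat bundle: complex conjugation identifies $\del_L$-harmonic $(2,0)$-forms valued in $L$ with $\delbar$-harmonic $(0,2)$-forms valued in $\bar L\in\mathrm{Pic}^0(X)$, so $\dim_\C\mathcal H^{2,0}_{\del_L}(X;L)=\dim_\C H^2(X;\mathcal{O}_X(\bar L))$, and Serre duality rewrites this as $\dim_\C H^0(X;K_X\otimes\bar L^{-1})$ with $\bar L^{-1}\in\mathrm{Pic}^0(X)$. The main obstacle is therefore the purely holomorphic vanishing $H^0(X;K_X\otimes M)=0$ for every $M\in\mathrm{Pic}^0(X)$ on a minimal surface of class $\mathrm{VII}$: for $M$ of finite order it is immediate from $\kappa(X)=-\infty$, since a suitable power produces a genuine pluricanonical section, and in general it must be extracted from the structure of class $\mathrm{VII}_0$ surfaces, namely the absence of effective divisors representing $K_X\otimes M$. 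Granting this, $\mathcal H^{2,0}_{\del_L}(X;L)=0$, whence $\delbar_L\!\left(\mathcal H^{2,0}_{\del_L}(X;L)\right)=0$ and the map is injective. This is precisely where minimality enters: for general surfaces (for instance with $p_g>0$) the vanishing fails, and one must fall back on the harmonic-form/self-duality argument of Theorem~\ref{thm:delta1=0}(i), which is unavailable for non-unitary coefficients.
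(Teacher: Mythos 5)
Your argument is correct and is in substance the paper's own proof: both hinge on conjugating the $L$-valued $(2,0)$-datum into a $\bar L$-valued $(0,2)$-datum and invoking $H^{2}(X;\mathcal{O}_X(M))\simeq H^{0}(X;K_X\otimes M^{-1})^{*}=\{0\}$ for $M\in\mathrm{Pic}^0(X)$ on a class $\text{VII}_0$ surface (the paper cites \cite[Remark 2.21]{dloussky-ajm} for precisely this vanishing, which is also the external input you correctly flag as the crux). The only difference is cosmetic: the paper dispenses with the Hodge decomposition for $\square_{\del_L}$ by noting that $\vartheta$ with $\alpha=\delbar_L\vartheta$ is automatically $\del_L$-closed (since $\wedge^{3,0}X=0$), hence defines a class in the vanishing group and is therefore $\del_L$-exact, which gives $\alpha\in\imm\del_L\delbar_L$ directly.
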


\begin{proof}
 Let $\alpha\in\wedge^{2,1}X\otimes L$ be a $\delbar_L$-exact $(2,1)$-form. We need to prove that $\alpha$ is $\del_L\delbar_L$-exact too. Consider $\alpha = \delbar_L \vartheta$, where $\vartheta \in \wedge^{2,0}X\otimes L$. In particular, $\del_L\vartheta=0$, hence $\bar\vartheta$ defines a class in $H^{0,2}_{\delbar}(X;L)$. Note that $H^{0,2}_{\delbar}(X;L) \simeq H^2(X;\mathcal{O}_X(L)) \simeq H^{0}(X;K_X\otimes L^{-1}) = \{0\}$ for surfaces of class $\text{VII}_0$, \cite[Remark 2.21]{dloussky-ajm}. It follows that $\bar\vartheta = - \delbar_L \bar\eta$ for some $\eta\in\wedge^{1,0}X\otimes L$. Hence $\alpha=\del_L\delbar_L\eta$, that is, $\alpha$ is $\del_L\delbar_L$-exact.
\end{proof}

\medskip

We now compute the Bott-Chern cohomology of class $\text{VII}$ surfaces.

\begin{thm}\label{thm:bc-vii}
 The Bott-Chern numbers of compact complex surfaces in class $\text{VII}$ are:
 $$
 \begin{array}{ccccc}
     &  & h^{0,0}_{BC}=1 &  &  \\
     & h^{1,0}_{BC}=0 &  & h^{0,1}_{BC}=0 &  \\
    h^{2,0}_{BC}=0 &  & h^{1,1}_{BC}=b_2+1 &  & h^{0,2}_{BC}=0 \\
     & h^{2,1}_{BC}=1 &  & h^{1,2}_{BC}=1 &  \\
     &  & \phantom{\;.} h^{2,2}_{BC}=1 \;. &  & 
 \end{array}
 $$
\end{thm}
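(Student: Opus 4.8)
The plan is to compute each Bott-Chern number $h^{p,q}_{BC}$ of a class $\text{VII}$ surface $X$ separately, exploiting what is already known about the Dolbeault and de Rham cohomology of such surfaces together with the comparison maps and the Hodge-$*$ duality $h^{p,q}_{BC}=h^{n-q,n-p}_A$ recalled in the introduction. Recall that a class $\text{VII}$ surface has $b_1=1$, hence (since the Fr\"olicher spectral sequence degenerates) $h^{1,0}_{\delbar}=0$ and $h^{0,1}_{\delbar}=1$; moreover $b_2$ is the second Betti number and $p_g=h^{0,2}_{\delbar}=0$, while $q=h^{0,1}_{\delbar}=1$. Several of the Bott-Chern groups will follow immediately from these facts via the inclusions into Dolbeault cohomology.

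First I would dispatch the ``corner'' and ``holomorphic'' entries. For $h^{0,0}_{BC}=1$ one uses that $X$ is connected. For $h^{1,0}_{BC}$, the same computation as in the proof of Theorem \ref{thm:delta1=0}(ii) gives $H^{1,0}_{BC}(X)=\ker\del\cap\ker\delbar\cap\wedge^{1,0}X\hookrightarrow H^{1,0}_{\delbar}(X)=0$, so $h^{1,0}_{BC}=0$, and by conjugation $h^{0,1}_{BC}=0$. For $h^{2,0}_{BC}$, a $\del$-closed $\delbar$-closed $(2,0)$-form is in particular $\delbar$-closed and $\imm\del\delbar=0$ in bidegree $(2,0)$, so $H^{2,0}_{BC}(X)\hookrightarrow H^{2,0}_{\delbar}(X)$; by Serre duality $h^{2,0}_{\delbar}=h^{0,2}_{\delbar}=p_g=0$, whence $h^{2,0}_{BC}=0$ and, conjugating, $h^{0,2}_{BC}=0$. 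The top entry $h^{2,2}_{BC}=1$ follows from the Hodge-$*$ duality $h^{2,2}_{BC}=h^{0,0}_A=h^{0,0}_{BC}=1$, or directly since every form is $\del$- and $\delbar$-closed in top degree and the space of $\del\delbar$-exact $(2,2)$-forms has codimension one.

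Next I would handle the off-diagonal degree-$3$ entries $h^{2,1}_{BC}$ and $h^{1,2}_{BC}$. By Theorem \ref{thm:delta1=0}(i) the map $H^{2,1}_{BC}(X)\to H^{2,1}_{\delbar}(X)$ is injective, and by Kodaira--Serre duality $h^{2,1}_{\delbar}=h^{0,1}_{\delbar}=q=1$, so $h^{2,1}_{BC}\le 1$; to get equality I would exhibit a nonzero class, for instance the class of a suitable nonzero $\del$-closed $\delbar$-closed $(2,1)$-form coming from the canonical bundle pairing (the Serre-dual generator), verifying it is not $\del\delbar$-exact. Then $h^{1,2}_{BC}=h^{2,1}_{BC}=1$ by conjugation. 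The main quantitative step is the diagonal term $h^{1,1}_{BC}=b_2+1$. Here I would use $\Delta^1=0$ from Theorem \ref{thm:delta1=0}, the already-computed values, and the inequality \emph{à la} Fr\"olicher together with the Hodge-$*$ duality to pin down $\Delta^2$, or more directly compute $h^{1,1}_{BC}$ via the exact sequences relating Bott-Chern, Dolbeault and de Rham cohomology: since $X$ is not K\"ahler one expects the $(1,1)$ Bott-Chern group to be strictly larger than $h^{1,1}_{\delbar}=b_2+1$ would be in the K\"ahler case, and the precise count $b_2+1$ should emerge from the fact that $H^{1,1}_{BC}(X)$ surjects onto (the appropriate part of) $H^2_{dR}(X;\C)$ with a one-dimensional contribution from the image of $\del\delbar$-potentials.

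I expect the main obstacle to be the $(1,1)$ entry. The corner, holomorphic and top entries are essentially formal consequences of the inclusions into Dolbeault cohomology and the vanishing $p_g=0$, $q=1$ for class $\text{VII}$; the degree-$3$ entries are controlled by Theorem \ref{thm:delta1=0} up to producing one nonvanishing class. By contrast, $h^{1,1}_{BC}=b_2+1$ requires an honest identification of the $\del\delbar$-cohomology in bidegree $(1,1)$, which is where the non-K\"ahlerness of $X$ is concentrated (it is exactly $\Delta^2$). The cleanest route is probably to combine the already-established values with the constraint $\Delta^1=0$ and the relation between $\sum_{p+q=2}h^{p,q}_{BC}$, the Aeppli numbers, and $b_2$ furnished by the inequality \emph{à la} Fr\"olicher and Hodge-$*$ duality, reducing the computation of $h^{1,1}_{BC}$ to a single consistency equation once $\Delta^2$ is determined from structural properties of class $\text{VII}$ surfaces.
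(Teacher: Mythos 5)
Your treatment of the ``easy'' entries ($h^{0,0}_{BC}$, $h^{1,0}_{BC}=h^{0,1}_{BC}=0$, $h^{2,0}_{BC}=h^{0,2}_{BC}=0$, $h^{2,2}_{BC}=1$) is correct and coincides with the paper's. For $h^{2,1}_{BC}$ you obtain the upper bound $h^{2,1}_{BC}\le 1$ exactly as the paper does, but for the lower bound you propose to ``exhibit a nonzero class coming from the canonical bundle pairing, verifying it is not $\del\delbar$-exact'' --- this verification is not supplied and is not needed: since you already know $h^{1,0}_{BC}=h^{0,1}_{BC}=0$ and $b_1=1$, Theorem \ref{thm:delta1=0}(ii) gives $0=\Delta^1=2\left(h^{1,0}_{BC}+h^{2,1}_{BC}-b_1\right)$, hence $h^{2,1}_{BC}=h^{1,2}_{BC}=1$ with no further work. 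That is the route the paper takes.

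The genuine gap is at $h^{1,1}_{BC}=b_2+1$, which you correctly identify as the main obstacle but do not prove. What you have (non-K\"ahlerness, $\Delta^1=0$, the inequality \emph{\`a la} Fr\"olicher, Hodge-$*$ duality) yields only the lower bound $h^{1,1}_{BC}\ge b_2+1$, because $\Delta^2=2\left(h^{1,1}_{BC}-b_2\right)>0$. There is no ``single consistency equation'' that determines $\Delta^2$ from structural properties of class $\text{VII}$ surfaces; the numerical constraints are compatible with any value $h^{1,1}_{BC}\ge b_2+1$. The missing ingredient is the upper bound, which the paper obtains from the exact sequence
$$ 0 \to \frac{\imm\de \cap \wedge^{1,1}X}{\imm\del\delbar} \to H^{1,1}_{BC}(X) \to \imm \left( H^{1,1}_{BC}(X) \to H^{2}_{dR}(X;\C) \right) \to 0 $$
together with the nontrivial fact (Teleman's argument) that the left-hand term has dimension at most $1$. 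This is proved by fixing a Gauduchon metric with associated form $\omega$ and showing that pairing with $\omega$ is well defined on $\left(\imm\de\cap\wedge^{1,1}X\right)/\imm\del\delbar$ (using $\del\delbar\omega=0$ and Stokes) and injective: an exact $(1,1)$-form $\alpha$ with $\left\langle\alpha\middle\vert\omega\right\rangle=0$ can be corrected by a $\del\delbar$-potential to a primitive exact form (via $\ker\left\langle\sspace\middle\vert 1\right\rangle=\imm\Lambda\del\delbar$), which is anti-self-dual by the Weil identity and hence vanishes by Stokes. Your phrase ``a one-dimensional contribution from the image of $\del\delbar$-potentials'' gestures at the right structure but contains none of this argument, so as written the proposal establishes $h^{1,1}_{BC}\ge b_2+1$ but not the equality.
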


\begin{proof}
 It holds $H^{1,0}_{BC}(X)=\frac{\ker\del\cap\ker\delbar \cap \wedge^{1,0}X}{\imm\del\delbar}=\ker\del\cap\ker\delbar \cap \wedge^{1,0}X \subseteq \ker\delbar \cap \wedge^{1,0}X = \frac{\ker\delbar \cap \wedge^{1,0}X}{\imm\delbar} = H^{1,0}_{\delbar}(X) = \{0\}$ hence $h^{1,0}_{BC}=h^{0,1}_{BC}=0$.

 On the other side, by Theorem \ref{thm:delta1=0}, $0=\Delta^1=2\,\left(h^{1,0}_{BC}+h^{2,1}_{BC}-b_1\right)=2\, \left(h^{2,1}_{BC}-1\right)$ hence $h^{2,1}_{BC}=h^{1,2}_{BC}=1$.

 Similarly, it holds $H^{2,0}_{BC}(X)=\frac{\ker\del\cap\ker\delbar \cap \wedge^{2,0}X}{\imm\del\delbar}=\ker\del\cap\ker\delbar \cap \wedge^{2,0}X \subseteq \ker\delbar \cap \wedge^{2,0}X = \frac{\ker\delbar \cap \wedge^{2,0}X}{\imm\delbar} = H^{2,0}_{\delbar}(X) = \{0\}$ hence $h^{2,0}_{BC}=h^{0,2}_{BC}=0$.

 Note that, from \cite[Theorem A]{angella-tomassini-3}, we have $0\leq\Delta^2=2\,\left(h^{2,0}_{BC}+h^{1,1}_{BC}+h^{0,2}_{BC}-b_2\right)=2\,\left(h^{1,1}_{BC}-b_2\right)$ hence $h^{1,1}_{BC} \geq b_2$. More precisely, from \cite[Theorem B]{angella-tomassini-3} and Theorem \ref{thm:delta1=0}, we have that $h^{1,1}_{BC} = b_2$ if and only if $\Delta^2=0$ if and only if $X$ satisfies the $\del\delbar$-Lemma, in fact $X$ is K\"ahler, which is not the case.

 Finally, we prove that $h^{1,1}_{BC}=b_2+1$. Consider the following exact sequences from \cite[Lemma 2.3]{teleman}. More precisely, the sequence
 $$ 0 \to \frac{\imm\de \cap \wedge^{1,1}X}{\imm\del\delbar} \to H^{1,1}_{BC}(X) \to \imm \left( H^{1,1}_{BC}(X) \to H^{2}_{dR}(X;\C) \right) \to 0 $$
 is clearly exact.
 Furthermore, fix a Gauduchon metric $g$. Denote by $\omega:=g(J\sspace, \ssspace)$ the $(1,1)$-form associated to $g$, where $J$ denotes the integrable almost-complex structure. By definition of $g$ being Gauduchon, we have $\del\delbar\omega=0$. The sequence
 $$ 0 \to \frac{\imm\de \cap \wedge^{1,1}X}{\imm\del\delbar} \stackrel{\left\langle \sspace \middle\vert \omega \right\rangle}{\to} \C $$
 is exact. Indeed, firstly note that for $\eta=\del\delbar f\in\imm\del\delbar\cap\wedge^{1,1}X$, we have
 $$ \left\langle \eta \middle\vert \omega \right\rangle \;=\; \int_X \del\delbar f \wedge \overline{*\omega} \;=\; \int_X \del\delbar f \wedge \omega \;=\; \int_X f\, \del\delbar\omega \;=\; 0 $$
 by applying twice the Stokes theorem.
 Then, we recall the argument in \cite[Lemma 2.3(ii)]{teleman} for proving that the map
 $$ \left\langle \sspace \middle\vert \omega \right\rangle \colon \frac{\imm\de \cap \wedge^{1,1}X}{\imm\del\delbar} \to \C $$
 is injective. Take $\alpha=\de\beta\in\imm\de\cap\wedge^{1,1}X\cap\ker \left\langle \sspace \middle\vert \omega \right\rangle$. Then
 $$ \left\langle \Lambda\alpha \middle\vert 1 \right\rangle \;=\; \left\langle \alpha \middle\vert \omega \right\rangle \;=\; 0 \;, $$
 where $\Lambda$ is the adjoint operator of $\omega\wedge\sspace$ with respect to $\left\langle \sspace \middle\vert \ssspace \right\rangle$. Then $\Lambda \alpha \in \ker \left\langle \sspace \middle\vert 1 \right\rangle = \imm \Lambda\del\delbar$, by extending \cite[Corollary 7.2.9]{lubke-teleman} by $\C$-linearity. Take $u\in\mathcal{C}^\infty(X;\C)$ such that $\Lambda\alpha=\Lambda\del\delbar u$. Then, by defining $\alpha':=\alpha-\del\delbar u$, we have $\left[\alpha'\right]=\left[\alpha\right]\in \frac{\imm\de \cap \wedge^{1,1}X}{\imm\del\delbar}$, and $\Lambda \alpha'=0$, and $\alpha'=\de\beta'$ where $\beta':=\beta-\delbar u$.
 In particular, $\alpha'$ is primitive. Since $\alpha'$ is primitive and of type $(1,1)$, then it is anti-self-dual by the Weil identity. Then
 $$ \left\| \alpha' \right\|^2 \;=\; \left\langle \alpha' \middle\vert \alpha' \right\rangle \;=\; \int_X \alpha' \wedge \overline{*\alpha'} \;=\; - \int_X \alpha' \wedge \overline{\alpha'} \;=\; - \int_X \de\beta' \wedge \de\overline{\beta'} \;=\; - \int_X \de \left( \beta' \wedge \de \overline{\beta'} \right) \;=\; 0 $$
 and hence $\alpha'=0$, and therefore $\left[\alpha\right]=0$.

 Since the space $\frac{\imm\de \cap \wedge^{1,1}X}{\imm\del\delbar}$ is finite-dimensional, being a sub-space of $H^{1,1}_{BC}(X)$, and since the space $\imm \left( H^{1,1}_{BC}(X) \to H^{2}_{dR}(X;\C) \right)$ is finite-dimensional, being a sub-space of $H^{2}_{dR}(X;\C)$, we get that
 $$ \dim_\C \frac{\imm\de \cap \wedge^{1,1}X}{\imm\del\delbar} \;\leq\; \dim_\C \C \;=\; 1 \;, $$
 and hence
 $$ b_2 \;<\; \dim_\C H^{1,1}_{BC}(X) \;=\; \dim_\C \imm \left( H^{1,1}_{BC}(X) \to H^{2}_{dR}(X;\C) \right) + \dim_\C \frac{\imm\de \cap \wedge^{1,1}X}{\imm\del\delbar} \;\leq\; b_2 + 1 \;. $$
 We get that $\dim_\C H^{1,1}_{BC}(X)=b_2+1$.
\end{proof}

\medskip

Finally, we prove that the natural map $H^{1,2}_{BC}(X) \to H^{1,2}_{\delbar}(X)$ is not injective.

\begin{prop}
 Let $X$ be a compact complex surface in class $\text{VII}$. Then the natural map $H^{1,2}_{BC}(X) \to H^{1,2}_{\delbar}(X)$ induced by the identity is the zero map and not an isomorphism.
\end{prop}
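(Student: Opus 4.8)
The plan is to reduce both assertions to a single computation: that the target space $H^{1,2}_{\delbar}(X)$ is trivial. Once this is established, the map is the zero map for the trivial reason that its codomain vanishes, and it fails to be an isomorphism because, by Theorem \ref{thm:bc-vii}, its domain $H^{1,2}_{BC}(X)$ is one-dimensional.

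To carry this out, I would first identify the relevant Dolbeault number via Serre duality. On the compact complex surface $X$ one has
$$ H^{1,2}_{\delbar}(X) \;\simeq\; H^2\left(X;\Omega^1_X\right) \;\simeq\; H^0\left(X;\Omega^1_X\right)^* \;=\; \left(H^{1,0}_{\delbar}(X)\right)^* \;, $$
so it suffices to check $h^{1,0}_{\delbar}=0$. For this I would use the defining data of class $\text{VII}$: the first Betti number is $b_1=1$, which is odd, so $X$ is non-K\"ahler. Since the Fr\"olicher spectral sequence degenerates for surfaces, $b_1=h^{1,0}_{\delbar}+h^{0,1}_{\delbar}$; combined with the classical relation $h^{0,1}_{\delbar}=h^{1,0}_{\delbar}+1$ valid for surfaces with odd first Betti number, this forces $h^{1,0}_{\delbar}=0$ and $h^{0,1}_{\delbar}=1$. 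Hence $H^{1,2}_{\delbar}(X)=\{0\}$.

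With the target shown to be trivial, the natural map $H^{1,2}_{BC}(X)\to H^{1,2}_{\delbar}(X)$ is automatically the zero map, and since Theorem \ref{thm:bc-vii} gives $h^{1,2}_{BC}=1\neq 0$, the domain is nontrivial and the map cannot be an isomorphism. There is essentially no obstacle here beyond recalling the Hodge-theoretic data of class $\text{VII}$ surfaces; the real interest of the statement is the contrast with Theorem \ref{thm:delta1=0}(i), where the conjugate map $H^{2,1}_{BC}(X)\to H^{2,1}_{\delbar}(X)$ is injective. This asymmetry reflects the fact that Bott-Chern cohomology is stable under conjugation, $h^{2,1}_{BC}=h^{1,2}_{BC}=1$, whereas Dolbeault cohomology is not in the non-K\"ahler setting, since $h^{2,1}_{\delbar}=1$ while $h^{1,2}_{\delbar}=0$.
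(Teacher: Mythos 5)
Your proof is correct and follows essentially the same route as the paper: both reduce the statement to the vanishing $H^{1,2}_{\delbar}(X)\simeq \left(H^{1,0}_{\delbar}(X)\right)^*=\{0\}$ via Serre duality together with $h^{1,0}_{\delbar}=0$ for class $\text{VII}$ surfaces, and then invoke $h^{1,2}_{BC}=1$ from Theorem \ref{thm:bc-vii}. The only cosmetic difference is that you derive $h^{1,0}_{\delbar}=0$ from $b_1=1$ and the classical relations for surfaces with odd first Betti number, whereas the paper cites the vanishing of the pluri-genera and Kodaira--Serre duality directly.
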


\begin{proof}
 Note that, for class $\text{VII}$ surfaces, the pluri-genera are zero. In particular, $H^{1,2}_{\delbar}(X) \simeq H^{1,0}_{\delbar}(X)=\{0\}$, by Kodaira and Serre duality. By Theorem \ref{thm:bc-vii}, one has $H^{1,2}_{BC}(X)\neq\{0\}$.
\end{proof}

\subsection{Cohomologies of Calabi-Eckmann surface}
In this section, as an explicit example, we list the representatives of the cohomologies of a compact complex surface in class $\text{VII}$: namely, we consider the Calabi-Eckmann structure on the differentiable manifolds underlying the Hopf surfaces.

\medskip

 Consider the differentiable manifold $X:=\mathbb{S}^1\times\mathbb{S}^3$. As a Lie group, $\mathbb{S}^3=SU(2)$ has a global left-invariant co-frame $\left\{e^1,e^2,e^3\right\}$ such that $\de e^1=-2e^2\wedge e^3$ and $\de e^2=2e^1\wedge e^3$ and $\de e^3=-2 e^1\wedge e^2$. Hence, we consider a global left-invariant co-frame $\left\{f, e^1,e^2,e^3\right\}$ on $X$ with structure equations
 $$ \left\{\begin{array}{rcl}
            \de f   &=& \phantom{+} 0 \\[5pt]
            \de e^1 &=&          -  2\, e^2 \wedge e^3 \\[5pt]
            \de e^2 &=& \phantom{+} 2\, e^1 \wedge e^3 \\[5pt]
            \de e^3 &=&          -  2\, e^1 \wedge e^2
           \end{array}\right. \;.
 $$

 Consider the left-invariant almost-complex structure defined by the $(1,0)$-forms
 $$ \left\{\begin{array}{rcl}
            \varphi^1 &:=& e^1 + \im\, e^2 \\[5pt]
            \varphi^2 &:=& e^3 + \im\, f
           \end{array}\right. \;.
 $$
 By computing the complex structure equations, we get
 $$ \left\{\begin{array}{rcl}
            \del \varphi^1 &=& \im\, \varphi^1 \wedge \varphi^2 \\[5pt]
            \del \varphi^2 &=& 0
           \end{array}\right.
    \qquad \text{ and } \qquad
    \left\{\begin{array}{rcl}
            \delbar \varphi^1 &=& \im\, \varphi^1 \wedge \bar\varphi^2 \\[5pt]
            \delbar \varphi^2 &=& - \im\, \varphi^1 \wedge \bar\varphi^1
           \end{array}\right. \;.
 $$
 We note that the almost-complex structure is in fact integrable.

 The manifold $X$ is a compact complex manifold not admitting K\"ahler metrics. It is bi-holomorphic to the complex manifold $M_{0,1}$ considered by Calabi and Eckmann, \cite{calabi-eckmann}, see \cite[Theorem 4.1]{parton-rend}.

 Consider the Hermitian metric $g$ whose associated $(1,1)$-form is
 $$ \omega \;:=\; \frac{\im}{2}\, \sum_{j=1}^{2} \varphi^j \wedge \bar\varphi^j \;. $$

 As for the de Rham cohomology, from the K\"unneth formula we get
 $$ H^\bullet_{dR}(X;\C) \;=\; \C \left\langle 1 \right\rangle \oplus \C\left\langle \varphi^{2}-\bar\varphi^{2} \right\rangle \oplus \C\left\langle \varphi^{12\bar1}-\varphi^{1\bar1\bar2} \right\rangle \oplus \C\left\langle \varphi^{12\bar1\bar2} \right\rangle \;, $$
 (where, here and hereafter, we shorten, e.g., $\varphi^{12\bar1} := \varphi^1\wedge\varphi^2\wedge\bar\varphi^1$).

 By \cite[Appendix II, Theorem 9.5]{hirzebruch}, one has that a model for the Dolbeault cohomology is given by
 $$ H_{\delbar}^{\bullet,\bullet}(X) \;\simeq\; \bigwedge \left\langle x_{2,1},\, x_{0,1} \right\rangle \;, $$
 where $x_{i,j}$ is an element of bi-degree $(i,j)$. In particular, we recover that the Hodge numbers $\left\{ h^{p,q}_{\delbar}:=\dim_\C H^{p,q}_{\delbar}(X) \right\}_{p,q\in\{0,1,2\}}$ are
 $$
 \begin{array}{ccccc}
   &  & h^{0,0}_{\delbar}=1 &  & \\
   & h^{1,0}_{\delbar}=0 &  & h^{0,1}_{\delbar}=1 & \\
  h^{2,0}_{\delbar}=0 &  & h^{1,1}_{\delbar}=0 &  & h^{0,2}_{\delbar}=0 \\
   & h^{2,1}_{\delbar}=1 &  & h^{1,2}_{\delbar}=0 & \\
   &  & h^{2,2}_{\delbar}=1 &  & \\
 \end{array} \;.
 $$
 We note that the sub-complex
 $$ \iota \colon \bigwedge \left\langle \varphi^1,\, \varphi^2,\, \bar\varphi^1,\, \bar\varphi^2 \right\rangle \hookrightarrow \wedge^{\bullet,\bullet}X $$
 is such that $H_{\delbar}(\iota)$ is an isomorphism. More precisely, we get
 \begin{eqnarray*}
  H^{\bullet,\bullet}_{\delbar}(X) &=& \C\left\langle 1 \right\rangle \oplus \C\left\langle \left[\varphi^{\bar{2}}\right] \right\rangle \oplus \C\left\langle \left[ \varphi^{12\bar1} \right] \right\rangle \oplus \C\left\langle \left[ \varphi^{12\bar1\bar2}\right] \right\rangle \;,
 \end{eqnarray*}
 where we have listed the harmonic representatives with respect to the Dolbeault Laplacian of $g$.
 
 By \cite[Theorem 1.3, Proposition 2.2]{angella-kasuya-1}, we have also $H_{BC}(\iota)$ isomorphism. In particular, we get
 \begin{eqnarray*}
  H^{\bullet,\bullet}_{BC}(X) &=& \C\left\langle 1 \right\rangle \oplus \C\left\langle \left[\varphi^{1\bar1}\right] \right\rangle \oplus \C\left\langle \left[\varphi^{12\bar1}\right] \right\rangle \oplus \C\left\langle \left[\varphi^{1\bar1\bar2}\right] \right\rangle \oplus \C\left\langle \left[\varphi^{12\bar1\bar2}\right] \right\rangle \;,
 \end{eqnarray*}
 where we have listed the harmonic representatives with respect to the Bott-Chern Laplacian of $g$.

 By \cite[\S2.c]{schweitzer}, we have
 \begin{eqnarray*}
  H^{\bullet,\bullet}_{A}(X) &=& \C\left\langle 1 \right\rangle \oplus \C\left\langle \left[\varphi^{2}\right] \right\rangle \oplus \C\left\langle \left[\varphi^{\bar2}\right] \right\rangle \oplus \C\left\langle \left[\varphi^{2\bar2}\right] \right\rangle \oplus \C\left\langle \left[\varphi^{12\bar1\bar2}\right] \right\rangle \;,
 \end{eqnarray*}
 where we have listed the harmonic representatives with respect to the Aeppli Laplacian of $g$.

 Note in particular that the natural map $H^{2,1}_{BC}(X) \to H^{2,1}_{\delbar}(X)$ induced by the identity is an isomorphism, and that the natural map $H^{2,1}_{BC}(X) \to H^3_{dR}(X;\C)$ induced by the identity is injective.

\section{Complex surfaces diffeomorphic to solvmanifolds}

Let $X$ be a compact complex surface diffeomorphic to a solvmanifold $\left. \Gamma \middle\backslash G \right.$. By \cite[Theorem 1]{hasegawa-jsg}, $X$ is
\begin{inparaenum}[\itshape (A)]
 \item either a complex torus,
 \item or a hyperelliptic surface,
 \item or a Inoue surface of type $\mathcal{S}_M$,
 \item or a primary Kodaira surface,
 \item or a secondary Kodaira surface,
 \item or a Inoue surface of type $\mathcal{S}^\pm$,
\end{inparaenum}
and, as such, it is endowed with a left-invariant complex structure.

\medskip

In each case, we recall the structure equations of the group $G$, see \cite{hasegawa-jsg}. More precisely, take a basis $\left\{ e_1, e_2, e_3, e_4 \right\}$ of the Lie algebra $\mathfrak{g}$ naturally associated to $G$. We have the following commutation relations, according to \cite{hasegawa-jsg}:
\begin{enumerate}[(A)]
 \item differentiable structure underlying a {\em complex torus}:
  $$ \left[e_j,e_k\right]=0 \qquad \text{ for any } j,k\in\{1,2,3,4\} \;; $$
  (hereafter, we write only the non-trivial commutators);
 \item differentiable structure underlying a {\em hyperelliptic surface}:
  $$ \left[e_1,e_4\right]=e_2 \;, \quad \left[e_2,e_4\right]=-e_1 \;; $$
 \item differentiable structure underlying a {\em Inoue surface of type $\mathcal{S}_M$}:
  $$ \left[e_1,e_4\right]=-\alpha\, e_1+\beta\, e_2 \;, \quad \left[e_2,e_4\right]=-\beta\,e_1-\alpha\,e_2 \;, \quad \left[e_3,e_4\right]=2\alpha\,e_3 \;, $$
  where $\alpha\in\R\setminus\{0\}$ and $\beta\in\R$;
 \item differentiable structure underlying a {\em primary Kodaira surface}:
  $$ \left[e_1,e_2\right]=-e_3 \;; $$
 \item differentiable structure underlying a {\em secondary Kodaira surface}:
  $$ \left[e_1,e_2\right]=-e_3 \;, \quad \left[e_1,e_4\right]=e_2 \;, \quad \left[e_2,e_4\right]=-e_1 \;; $$
 \item differentiable structure underlying a {\em Inoue surface of type $\mathcal{S}^\pm$}:
  $$ \left[e_2,e_3\right]=-e_1 \;, \quad \left[e_2,e_4\right]=-e_2 \;, \quad \left[e_3,e_4\right]=e_3 \;. $$
\end{enumerate}

Denote by $\left\{ e^1, e^2, e^3, e^4 \right\}$ the dual basis of $\left\{e_1, e_2, e_3, e_4\right\}$. We recall that, for any $\alpha\in\mathfrak{g}^*$, for any $x,y\in\mathfrak{g}$, it holds $\de \alpha (x,y)=-\alpha\left([x,y]\right)$. Hence we get the following structure equations:
\begin{enumerate}[(A)]
 \item differentiable structure underlying a {\em complex torus}:
  $$ \left\{ \begin{array}{lcl}
              \de e^1 &=& 0 \\[5pt]
              \de e^2 &=& 0 \\[5pt]
              \de e^3 &=& 0 \\[5pt]
              \de e^4 &=& 0
             \end{array} \right. \;; $$
 \item differentiable structure underlying a {\em hyperelliptic surface}:
  $$ \left\{ \begin{array}{lcl}
              \de e^1 &=& e^2 \wedge e^4 \\[5pt]
              \de e^2 &=& - e^1 \wedge e^4 \\[5pt]
              \de e^3 &=& 0 \\[5pt]
              \de e^4 &=& 0
             \end{array} \right. \;; $$
 \item differentiable structure underlying a {\em Inoue surface of type $\mathcal{S}_M$}:
  $$ \left\{ \begin{array}{lcl}
              \de e^1 &=& \alpha\, e^1\wedge e^4 + \beta\, e^2\wedge e^4 \\[5pt]
              \de e^2 &=& -\beta\, e^1\wedge e^4 + \alpha\, e^2\wedge e^4 \\[5pt]
              \de e^3 &=& -2\alpha\, e^3\wedge e^4 \\[5pt]
              \de e^4 &=& 0
             \end{array} \right. \;; $$
 \item differentiable structure underlying a {\em primary Kodaira surface}:
  $$ \left\{ \begin{array}{lcl}
              \de e^1 &=& 0 \\[5pt]
              \de e^2 &=& 0 \\[5pt]
              \de e^3 &=& e^1 \wedge e^2 \\[5pt]
              \de e^4 &=& 0
             \end{array} \right. \;; $$
 \item differentiable structure underlying a {\em secondary Kodaira surface}:
  $$ \left\{ \begin{array}{lcl}
              \de e^1 &=& e^2 \wedge e^4 \\[5pt]
              \de e^2 &=& - e^1 \wedge e^4 \\[5pt]
              \de e^3 &=& e^1 \wedge e^2 \\[5pt]
              \de e^4 &=& 0
             \end{array} \right. \;; $$
 \item differentiable structure underlying a {\em Inoue surface of type $\mathcal{S}^\pm$}:
  $$ \left\{ \begin{array}{lcl}
              \de e^1 &=& e^2 \wedge e^3 \\[5pt]
              \de e^2 &=& e^2 \wedge e^4 \\[5pt]
              \de e^3 &=& -e^3\wedge e^4 \\[5pt]
              \de e^4 &=& 0
             \end{array} \right. \;. $$
\end{enumerate}

In cases (A), (B), (C), (D), (E), consider the $G$-left-invariant almost-complex structure $J$ on $X$ defined by
$$ J e_1 \;:=\; e_2 \quad \text{ and } J e_2 \;:=\; -e_1 \quad \text{ and } \quad J e_3 \;:=\; e_4 \quad \text{ and } J e_4 \;:=\; -e_3 \;. $$
Consider the $G$-left-invariant $(1,0)$-forms
$$ \left\{ \begin{array}{l}
            \varphi^1 \;:=\; e^1 + \im e^2 \\[5pt]
            \varphi^2 \;:=\; e^3 + \im e^4
           \end{array}
 \right. \;.
$$

In case (F), consider the $G$-left-invariant almost-complex structure $J$ on $X$ defined by
$$ J e_1 \;:=\; e_2 \quad \text{ and } J e_2 \;:=\; -e_1 \quad \text{ and } \quad J e_3 \;:=\; e_4 - q\, e_2 \quad \text{ and } J e_4 \;:=\; -e_3 - q\, e_1 \;, $$
where $q \in \R$.
Consider the $G$-left-invariant $(1,0)$-forms
$$ \left\{ \begin{array}{l}
            \varphi^1 \;:=\; e^1 + \im e^2 + \im\,q\, e^4 \\[5pt]
            \varphi^2 \;:=\; e^3 + \im e^4
           \end{array}
 \right. \;.
$$

With respect to the $G$-left-invariant coframe $\left\{\varphi^1,\, \varphi^2\right\}$ for the holomorphic tangent bundle $T^{1,0}\left. \Gamma \middle\backslash G \right.$, we have the following structure equations. (As for notation, we shorten, e.g., $\varphi^{1\bar2} := \varphi^{1} \wedge \bar\varphi^{2}$.)

\begin{enumerate}[(A)]
 \item {\em torus}:
  $$
   \left\{
    \begin{array}{lcl}
     \de\varphi^1 & = & 0 \\[5pt]
     \de\varphi^2 & = & 0
    \end{array}
   \right.
  $$

 \item {\em hyperelliptic surface}:
  $$
   \left\{
    \begin{array}{lcl}
     \de\varphi^1 & = & -\frac{1}{2}\, \varphi^{12} + \frac{1}{2}\, \varphi^{1\bar2} \\[5pt]
     \de\varphi^2 & = & 0
    \end{array}
   \right.
  $$

 \item {\em Inoue surface $\mathcal{S}_M$}:
  $$
   \left\{
    \begin{array}{lcl}
     \de\varphi^1 & = & \frac{\alpha-\im\beta}{2\im}\, \varphi^{12} - \frac{\alpha-\im\beta}{2\im}\, \varphi^{1\bar2} \\[5pt]
     \de\varphi^2 & = & -\im\alpha\, \varphi^{2\bar2}
    \end{array}
   \right.
  $$
 (where $\alpha\in\R\setminus\{0\}$ and $\beta\in\R$);

 \item {\em primary Kodaira surface}:
  $$
   \left\{
    \begin{array}{lcl}
     \de\varphi^1 & = & 0 \\[5pt]
     \de\varphi^2 & = & \frac{\im}{2}\, \varphi^{1\bar1}
    \end{array}
   \right.
  $$

 \item {\em secondary Kodaira surface}:
  $$
   \left\{
    \begin{array}{lcl}
     \de\varphi^1 & = & -\frac{1}{2}\, \varphi^{12} + \frac{1}{2}\, \varphi^{1\bar2} \\[5pt]
     \de\varphi^2 & = & \frac{\im}{2}\, \varphi^{1\bar1}
    \end{array}
   \right.
  $$

 \item {\em Inoue surface $\mathcal{S}^\pm$}:
  $$
   \left\{
    \begin{array}{lcl}
     \de\varphi^1 & = & \frac{1}{2\im}\, \varphi^{12}+\frac{1}{2\im}\, \varphi^{2\bar1} + \frac{q\,\im}{2}\, \varphi^{2\bar2} \\[5pt]
     \de\varphi^2 & = & \frac{1}{2\im}\, \varphi^{2\bar2}
    \end{array}
   \right. \;.
  $$
\end{enumerate}

\section{Cohomologies of complex surfaces diffeomorphic to solvmanifolds}

In this section, we compute the Dolbeault and Bott-Chern cohomologies of the compact complex surfaces diffeomorphic to a solvmanifold.

\medskip

We prove the following theorem.

\begin{thm}\label{thm:bc-solvmfds}
 Let $X$ be a compact complex surface diffeomorphic to a solvmanifold $\left. \Gamma \middle\backslash G \right.$; denote the Lie algebra of $G$ by $\mathfrak{g}$. Then the inclusion $\left(\wedge^{\bullet,\bullet}\mathfrak{g}^*,\, \del,\, \delbar\right) \hookrightarrow \left(\wedge^{\bullet,\bullet}X,\, \del,\, \delbar\right)$ induces an isomorphism both in Dolbeault and in Bott-Chern cohomologies. In particular, the dimensions of the de Rham, Dolbeault, and Bott-Chern cohomologies and the degrees of non-K\"ahlerness are summarized in Table \ref{table:cohom}.
\end{thm}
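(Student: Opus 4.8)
The plan is to replace the infinite-dimensional de Rham, Dolbeault and Bott-Chern complexes of $X$ by the finite-dimensional invariant model $(\wedge^{\bullet,\bullet}\g^*,\,\del,\,\delbar)$, to compute the latter by hand from the structure equations recorded in the previous section, and then to certify, case by case in the list (A)--(F), that the inclusion $\iota$ is a quasi-isomorphism for each of the three cohomologies. The explicit computation on $\wedge^{\bullet,\bullet}\g^*$ is pure linear algebra on a $16$-dimensional bigraded space; conjugation symmetry and the duality $H^{p,q}_{BC}(\g)\simeq H^{2-q,2-p}_{A}(\g)$ of \cite{schweitzer} roughly halve the work, and Theorem~\ref{thm:delta1=0} already disposes of everything in total degree $1$, so the only delicate invariants are the Bott-Chern numbers in bidegree $(1,1)$ and their duals. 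The whole content of the statement is therefore the comparison between the invariant model and the honest cohomology.

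For that comparison I would first settle de Rham and Dolbeault, and then deduce Bott-Chern from them. I would split (A)--(F) into the two nilmanifold cases and the four solvable non-nilpotent ones. In cases (A) and (D) the group $G$ is nilpotent: Nomizu's theorem gives $H_{dR}(\iota)$ an isomorphism, and the complex structures written above are nilpotent in the sense of Cordero--Fern\'andez--Gray--Ugarte, so that the theorem of Console--Fino applies and $H_{\delbar}(\iota)$ is an isomorphism as well. Having $H_{\delbar}(\iota)$ (and, by conjugation, $H_{\del}(\iota)$) an isomorphism, I would then invoke the criterion of \cite{angella-kasuya-1} exactly as in the Calabi--Eckmann computation carried out above to conclude that $H_{BC}(\iota)$ is an isomorphism.

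For the solvable cases I would proceed analogously but with heavier tools. In the completely solvable instances, namely (F) and (C) with $\beta=0$, Hattori's theorem replaces Nomizu for de Rham, while the Dolbeault comparison follows from Kasuya's computation of Dolbeault cohomology of solvmanifolds; Bott-Chern again follows via \cite{angella-kasuya-1}. In the three remaining non-completely-solvable cases (B), (E), and (C) with $\beta\neq 0$, I would pass to Kasuya's finite-dimensional twisted subcomplexes and check in each case that only the trivial character contributes, so that the plain invariant forms already compute $H_{\delbar}$; for (B) and (E) one may instead exploit that a hyperelliptic surface and a secondary Kodaira surface are finite quotients of a torus and of a primary Kodaira surface, reducing these to cases (A) and (D).

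The main obstacle I anticipate is precisely this last point: for the non-completely-solvable groups, invariance under $G$ does not interact well with $\del$ and $\delbar$, averaging over $G$ need not converge, and a priori the invariant model could miss classes coming from non-invariant forms. The crux is thus to rule out extra contributions in Kasuya's twisted models, that is, to prove that the finite-dimensional complex $(\wedge^{\bullet,\bullet}\g^*,\,\del,\,\delbar)$ is faithful for $H_{\delbar}$ in every case; once this faithfulness is in place, the passage to $H_{BC}$ through \cite{angella-kasuya-1} is formal and the assembly of the six computations into Table~\ref{table:cohom} is mechanical.
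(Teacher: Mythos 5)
Your strategy is workable in outline but it is genuinely different from, and considerably heavier than, the paper's argument, and it leaves its own crux unresolved. The paper does not invoke Nomizu, Hattori, the Console--Fino theorem on nilpotent complex structures, or Kasuya's twisted complexes for the Dolbeault comparison at all. It uses only two soft facts: (a) the inclusion of left-invariant forms induces an \emph{injective} map on Dolbeault and Bott-Chern cohomology (\cite[Lemma 9]{console-fino}, \cite[Lemma 3.6]{angella-1}); and (b) the Fr\"olicher spectral sequence of \emph{every} compact complex surface degenerates at the first level, so that $\sum_{p+q=k}\dim_\C H^{p,q}_{\delbar}(X)=b_k$. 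Since the Betti numbers of each surface in the list (A)--(F) are known, one computes the invariant Dolbeault groups explicitly from the structure equations, checks that their dimensions already sum to $b_k$ in each total degree, and concludes from injectivity that the invariant model exhausts $H^{\bullet,\bullet}_{\delbar}(X)$ bidegree by bidegree. This dimension count is precisely what disposes of the point you correctly identify as the main obstacle --- the possible failure of faithfulness of $\left(\wedge^{\bullet,\bullet}\g^*,\del,\delbar\right)$ for the non-completely-solvable groups (B), (E), and (C) with $\beta\neq 0$ --- without analysing characters in twisted models or passing to finite covers. The final step, deducing the Bott-Chern isomorphism from the Dolbeault one via \cite[Theorem 3.7]{angella-1} and \cite[Theorems 1.3, 1.6]{angella-kasuya-1}, is the same in both arguments.

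As written, your proposal is therefore not yet a proof: the assertion that ``only the trivial character contributes'' in Kasuya's subcomplexes is exactly the hard verification you defer, and until it is carried out the three non-completely-solvable cases remain open in your scheme. The surface-specific $E_1$-degeneration argument is the cheap and uniform way around that verification, and it is worth internalising as the real content of the paper's proof.
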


\begin{proof}
 Firstly, we compute the cohomologies of the sub-complex of $G$-left-invariant forms. The computations are straightforward from the structure equations.

\begin{center}
\begin{table}[ht]
 \centering
\resizebox{\textwidth}{!}{
\begin{tabular}{>{$\mathbf\bgroup}l<{\mathbf\egroup$} || >{$}l<{$} | >{$}c<{$} | >{$}l<{$} | >{$}c<{$} || >{$}l<{$} | >{$}c<{$} | >{$}l<{$} | >{$}c<{$} || >{$}l<{$} | >{$}c<{$} | >{$}l<{$} | >{$}c<{$} ||}
\toprule
 & \multicolumn{4}{c||}{\text{(A) torus}} & \multicolumn{4}{c||}{\text{(B) hyperelliptic}} & \multicolumn{4}{c||}{\text{(C) Inoue }$\mathcal{S}_M$} \\
(p,q) & H^{p,q}_{\delbar} & \dim_\C H^{p,q}_{\delbar}  & H^{p,q}_{BC} & \dim_\C H^{p,q}_{BC}  & H^{p,q}_{\delbar} & \dim_\C H^{p,q}_{\delbar}  & H^{p,q}_{BC} & \dim_\C H^{p,q}_{BC} & H^{p,q}_{\delbar} & \dim_\C H^{p,q}_{\delbar}  & H^{p,q}_{BC} & \dim_\C H^{p,q}_{BC} \\
\toprule
(0,0) & \left\langle 1 \right\rangle & 1 & \left\langle 1 \right\rangle & 1 & \left\langle 1 \right\rangle & 1 & \left\langle 1 \right\rangle & 1 & \left\langle 1 \right\rangle & 1 & \left\langle 1 \right\rangle & 1 \\
\midrule[0.02em]
(1,0) & \left\langle \varphi^{1},\, \varphi^{2} \right\rangle & 2 & \left\langle \varphi^{1},\, \varphi^{2} \right\rangle & 2 & \left\langle \varphi^2 \right\rangle & 1 & \left\langle \varphi^2 \right\rangle & 1 & \left\langle 0 \right\rangle & 0 & \left\langle 0 \right\rangle & 0 \\
(0,1) & \left\langle \varphi^{\bar1},\, \varphi^{\bar2} \right\rangle & 2 & \left\langle \varphi^{\bar1},\, \varphi^{\bar2} \right\rangle & 2 & \left\langle \varphi^{\bar2} \right\rangle & 1 & \left\langle \varphi^{\bar2} \right\rangle & 1 & \left\langle \varphi^{\bar2} \right\rangle & 1 & \left\langle 0 \right\rangle & 0\\
\midrule[0.02em]
(2,0) & \left\langle \varphi^{12} \right\rangle & 1 & \left\langle \varphi^{12} \right\rangle & 1 & \left\langle 0 \right\rangle & 0& \left\langle 0 \right\rangle & 0 & \left\langle 0 \right\rangle & 0 & \left\langle 0 \right\rangle & 0 \\
(1,1) & \left\langle \varphi^{1\bar1},\, \varphi^{1\bar2},\, \varphi^{2\bar1},\, \varphi^{2\bar2} \right\rangle & 4 & \left\langle \varphi^{1\bar1},\, \varphi^{1\bar2},\, \varphi^{2\bar1},\, \varphi^{2\bar2} \right\rangle & 4 & \left\langle \varphi^{1\bar1},\, \varphi^{2\bar2} \right\rangle & 2 & \left\langle \varphi^{1\bar1},\, \varphi^{2\bar2} \right\rangle & 2 & \left\langle 0 \right\rangle & 0 & \left\langle \varphi^{2\bar2} \right\rangle & 1 \\
(0,2) & \left\langle \varphi^{\bar1\bar2} \right\rangle & 1 & \left\langle \varphi^{\bar1\bar2} \right\rangle & 1 & \left\langle 0 \right\rangle & 0 & \left\langle 0 \right\rangle & 0 & \left\langle 0 \right\rangle & 0 & \left\langle 0 \right\rangle & 0 \\
\midrule[0.02em]
(2,1) & \left\langle \varphi^{12\bar1},\, \varphi^{12\bar2} \right\rangle & 2  & \left\langle \varphi^{12\bar1},\, \varphi^{12\bar2} \right\rangle & 2 & \left\langle \varphi^{12\bar1} \right\rangle & 1 & \left\langle \varphi^{12\bar1} \right\rangle & 1 & \left\langle \varphi^{12\bar1} \right\rangle & 1 & \left\langle \varphi^{12\bar1} \right\rangle & 1 \\
(1,2) & \left\langle \varphi^{1\bar1\bar2},\, \varphi^{2\bar1\bar2} \right\rangle & 2 & \left\langle \varphi^{1\bar1\bar2},\, \varphi^{2\bar1\bar2} \right\rangle & 2 & \left\langle \varphi^{1\bar1\bar2} \right\rangle & 1 & \left\langle \varphi^{1\bar1\bar2} \right\rangle & 1 & \left\langle 0 \right\rangle & 0 & \left\langle \varphi^{1\bar1\bar2} \right\rangle & 1 \\
\midrule[0.02em]
(2,2) & \left\langle \varphi^{12\bar1\bar2} \right\rangle & 1 & \left\langle \varphi^{12\bar1\bar2} \right\rangle & 1 & \left\langle \varphi^{12\bar1\bar2} \right\rangle & 1 & \left\langle \varphi^{12\bar1\bar2} \right\rangle & 1 & \left\langle \varphi^{12\bar1\bar2} \right\rangle & 1 & \left\langle \varphi^{12\bar1\bar2} \right\rangle & 1 \\
\bottomrule
\end{tabular}
}
\caption{Dolbeault and Bott-Chern cohomologies of compact complex surfaces diffeomorphic to solvmanifolds, part 1.}
\label{table:cohomologies-1}
\end{table}
\end{center}

\begin{center}
\begin{table}[ht]
 \centering
\resizebox{\textwidth}{!}{
\begin{tabular}{>{$\mathbf\bgroup}l<{\mathbf\egroup$} || >{$}l<{$} | >{$}c<{$} | >{$}l<{$} | >{$}c<{$} || >{$}l<{$} | >{$}c<{$} | >{$}l<{$} | >{$}c<{$} || >{$}l<{$} | >{$}c<{$} | >{$}l<{$} | >{$}c<{$} ||}
\toprule
 & \multicolumn{4}{c||}{\text{(D) primary Kodaira}} & \multicolumn{4}{c||}{\text{(E) secondary Kodaira}} & \multicolumn{4}{c||}{\text{(F) Inoue }$\mathcal{S}_\pm$} \\
(p,q) & H^{p,q}_{\delbar} & \dim_\C H^{p,q}_{\delbar}  & H^{p,q}_{BC} & \dim_\C H^{p,q}_{BC}  & H^{p,q}_{\delbar} & \dim_\C H^{p,q}_{\delbar}  & H^{p,q}_{BC} & \dim_\C H^{p,q}_{BC} & H^{p,q}_{\delbar} & \dim_\C H^{p,q}_{\delbar}  & H^{p,q}_{BC} & \dim_\C H^{p,q}_{BC} \\
\toprule
(0,0) & \left\langle 1 \right\rangle & 1 & \left\langle 1 \right\rangle & 1 & \left\langle 1 \right\rangle & 1 & \left\langle 1 \right\rangle & 1 & \left\langle 1 \right\rangle & 1 & \left\langle 1 \right\rangle & 1 \\
\midrule[0.02em]
(1,0) & \left\langle \varphi^{1} \right\rangle & 1 & \left\langle \varphi^{1} \right\rangle & 1 & \left\langle 0 \right\rangle & 0 & \left\langle 0 \right\rangle & 0 & \left\langle 0 \right\rangle & 0 & \left\langle 0 \right\rangle & 0 \\
(0,1) & \left\langle \varphi^{\bar1},\, \varphi^{\bar2} \right\rangle & 2 & \left\langle \varphi^{\bar1} \right\rangle & 1 & \left\langle \varphi^{\bar2} \right\rangle & 1 & \left\langle 0 \right\rangle & 0 & \left\langle \varphi^{\bar2} \right\rangle & 1 & \left\langle 0 \right\rangle & 0 \\
\midrule[0.02em]
(2,0) & \left\langle \varphi^{12} \right\rangle & 1 & \left\langle \varphi^{12} \right\rangle & 1 & \left\langle 0 \right\rangle & 0 & \left\langle 0 \right\rangle & 0 & \left\langle 0 \right\rangle & 0 & \left\langle 0 \right\rangle & 0 \\
(1,1) & \left\langle \varphi^{1\bar2},\, \varphi^{2\bar1} \right\rangle & 2 & \left\langle \varphi^{1\bar1},\, \varphi^{1\bar2},\, \varphi^{2\bar1} \right\rangle & 3 & \left\langle 0 \right\rangle & 0 & \left\langle \varphi^{1\bar1} \right\rangle & 1 & \left\langle 0 \right\rangle & 0 & \left\langle \varphi^{2\bar2} \right\rangle & 1 \\
(0,2) & \left\langle \varphi^{\bar1\bar2} \right\rangle & 1 & \left\langle \varphi^{\bar1\bar2} \right\rangle & 1 & \left\langle 0 \right\rangle & 0 & \left\langle 0 \right\rangle & 0 & \left\langle 0 \right\rangle & 0 & \left\langle 0 \right\rangle & 0 \\
\midrule[0.02em]
(2,1) & \left\langle \varphi^{12\bar1},\, \varphi^{12\bar2} \right\rangle & 2 & \left\langle \varphi^{12\bar1},\, \varphi^{12\bar2} \right\rangle & 2 & \left\langle \varphi^{12\bar1} \right\rangle & 1 & \left\langle \varphi^{12\bar1} \right\rangle & 1 & \left\langle \varphi^{12\bar1} \right\rangle & 1 & \left\langle \varphi^{12\bar1} \right\rangle & 1 \\
(1,2) & \left\langle \varphi^{2\bar1\bar2} \right\rangle & 1 & \left\langle \varphi^{1\bar1\bar2},\, \varphi^{2\bar1\bar2} \right\rangle & 2 & \left\langle 0 \right\rangle & 0 & \left\langle \varphi^{1\bar1\bar2} \right\rangle & 1 & \left\langle 0 \right\rangle & 0 & \left\langle \varphi^{1\bar1\bar2} \right\rangle & 1 \\
\midrule[0.02em]
(2,2) & \left\langle \varphi^{12\bar1\bar2} \right\rangle & 1 & \left\langle \varphi^{12\bar1\bar2} \right\rangle & 1 & \left\langle \varphi^{12\bar1\bar2} \right\rangle & 1 & \left\langle \varphi^{12\bar1\bar2} \right\rangle & 1 & \left\langle \varphi^{12\bar1\bar2} \right\rangle & 1 & \left\langle \varphi^{12\bar1\bar2} \right\rangle & 1 \\\bottomrule
\end{tabular}
}
\caption{Dolbeault and Bott-Chern cohomologies of compact complex surfaces diffeomorphic to solvmanifolds, part 2.}
\label{table:cohomologies-2}
\end{table}
\end{center}

\begin{center}
\begin{table}[ht]
 \centering
\resizebox{\textwidth}{!}{
\begin{tabular}{>{$\mathbf\bgroup}l<{\mathbf\egroup$} || >{$}l<{$} | >{$}c<{$} || >{$}l<{$} | >{$}c<{$} || >{$}l<{$} | >{$}c<{$} ||}
\toprule
 & \multicolumn{2}{c||}{\text{(A) torus}} & \multicolumn{2}{c||}{\text{(B) hyperelliptic}} & \multicolumn{2}{c||}{\text{(C) Inoue }$\mathcal{S}_M$} \\
k & H^{k}_{dR} & \dim_\C H^{k}_{dR} & H^{k}_{dR} & \dim_\C H^{k}_{dR} & H^{k}_{dR} & \dim_\C H^{k}_{dR} \\
\toprule
0 & \left\langle 1 \right\rangle & 1 & \left\langle 1 \right\rangle & 1 & \left\langle 1 \right\rangle & 1 \\
\midrule[0.02em]
1 & \left\langle \varphi^{1},\, \varphi^{2}, \, \varphi^{\bar1},\, \varphi^{\bar2} \right\rangle & 4 & \left\langle \varphi^{2},\, \varphi^{\bar2} \right\rangle & 2 & \left\langle \varphi^{2}-\varphi^{\bar2} \right\rangle & 1 \\
\midrule[0.02em]
2 & \left\langle \varphi^{12},\, \varphi^{1\bar1},\, \varphi^{1\bar2},\, \varphi^{2\bar1},\, \varphi^{2\bar2},\, \varphi^{\bar1\bar2} \right\rangle & 6 & \left\langle \varphi^{1\bar1},\, \varphi^{2\bar2} \right\rangle & 2 & \left\langle 0 \right\rangle & 0 \\
\midrule[0.02em]
3 & \left\langle \varphi^{12\bar1},\, \varphi^{12\bar2},\, \varphi^{1\bar1\bar2},\, \varphi^{2\bar1\bar2} \right\rangle & 4  & \left\langle \varphi^{12\bar1},\, \varphi^{1\bar1\bar2} \right\rangle & 2 & \left\langle \varphi^{12\bar1}-\varphi^{1\bar1\bar2} \right\rangle & 1 \\
\midrule[0.02em]
4 & \left\langle \varphi^{12\bar1\bar2} \right\rangle & 1 & \left\langle \varphi^{12\bar1\bar2} \right\rangle & 1 & \left\langle \varphi^{12\bar1\bar2} \right\rangle & 1 \\
\bottomrule
\end{tabular}
}
\caption{de Rham cohomology of compact complex surfaces diffeomorphic to solvmanifolds, part 1.}
\label{table:cohomologies-dR-1}
\end{table}
\end{center}

\begin{center}
\begin{table}[ht]
 \centering
\resizebox{\textwidth}{!}{
\begin{tabular}{>{$\mathbf\bgroup}l<{\mathbf\egroup$} || >{$}l<{$} | >{$}c<{$} || >{$}l<{$} | >{$}c<{$} || >{$}l<{$} | >{$}c<{$} ||}
\toprule
 & \multicolumn{2}{c||}{\text{(D) primary Kodaira}} & \multicolumn{2}{c||}{\text{(E) secondary Kodaira}} & \multicolumn{2}{c||}{\text{(F) Inoue $S^{\pm}$}} \\
k & H^{k}_{dR} & \dim_\C H^{k}_{dR} & H^{k}_{dR} & \dim_\C H^{k}_{dR} & H^{k}_{dR} & \dim_\C H^{k}_{dR} \\
\toprule
0 & \left\langle 1 \right\rangle & 1 & \left\langle 1 \right\rangle & 1 & \left\langle 1 \right\rangle & 1 \\
\midrule[0.02em]
1 & \left\langle \varphi^{1},\, \varphi^{\bar1},\, \varphi^{2}-\varphi^{\bar2} \right\rangle & 3 & \left\langle \varphi^{2}-\varphi^{\bar2} \right\rangle & 1 & \left\langle \varphi^{2}-\varphi^{\bar2} \right\rangle & 1 \\
\midrule[0.02em]
2 & \left\langle \varphi^{12},\, \varphi^{1\bar2},\, \varphi^{2\bar1},\, \varphi^{\bar1\bar2} \right\rangle & 4 & \left\langle 0 \right\rangle & 0 & \left\langle 0 \right\rangle & 0 \\
\midrule[0.02em]
3 & \left\langle \varphi^{12\bar2},\, \varphi^{2\bar1\bar2},\, \varphi^{12\bar1}-\varphi^{1\bar1\bar2} \right\rangle & 3 & \left\langle \varphi^{12\bar1}-\varphi^{1\bar1\bar2} \right\rangle & 1 & \left\langle \varphi^{12\bar1}-q\,\varphi^{12\bar2}-\varphi^{1\bar1\bar2}+q\,\varphi^{2\bar1\bar2} \right\rangle & 1 \\
\midrule[0.02em]
4 & \left\langle \varphi^{12\bar1\bar2} \right\rangle & 1 & \left\langle \varphi^{12\bar1\bar2} \right\rangle & 1 & \left\langle \varphi^{12\bar1\bar2} \right\rangle & 1 \\
\bottomrule
\end{tabular}
}
\caption{de Rham cohomology of compact complex surfaces diffeomorphic to solvmanifolds, part 2.}
\label{table:cohomologies-dR-2}
\end{table}
\end{center}

 In Tables \ref{table:cohomologies-1} and \ref{table:cohomologies-2} and in Tables \ref{table:cohomologies-dR-1} and \ref{table:cohomologies-dR-2}, we summarize the results of the computations. The sub-complexes of left-invariant forms are depicted in Figure \ref{fig:left-inv} (each dot represents a generator, vertical arrows depict the $\delbar$-operator, horizontal arrows depict the $\del$-operator, and trivial arrows are not shown.) The dimensions are listed in Table \ref{table:cohom}.

 On the one side, recall that the inclusion of left-invariant forms into the space of forms induces an injective map in Dolbeault and Bott-Chern cohomologies, see, e.g., \cite[Lemma 9]{console-fino}, \cite[Lemma 3.6]{angella-1}. On the other side, recall that the Fr\"olicher spectral sequence of a compact complex surface $X$ degenerates at the first level, equivalently, the equalities
 $$ \dim_\C H^{1,0}_{\delbar}(X) + \dim_\C H^{0,1}_{\delbar}(X) \;=\; \dim_\C H^1_{dR}(X;\C) $$
 and
 $$ \dim_\C H^{2,0}_{\delbar}(X) + \dim_\C H^{1,1}_{\delbar}(X) + \dim_\C H^{0,2}_{\delbar}(X) \;=\; \dim_\C H^2_{dR}(X;\C) $$
 hold. By comparing the dimensions in Table \ref{table:cohom} with the Betti numbers case by case, we find that the left-invariant forms suffice in computing the Dolbeault cohomology for each case. Then, by \cite[Theorem 3.7]{angella-1}, see also \cite[Theorem 1.3, Theorem 1.6]{angella-kasuya-1}, it follows that also the Bott-Chern cohomology is computed using just left-invariant forms.
\end{proof}

\begin{figure}[hpt!]
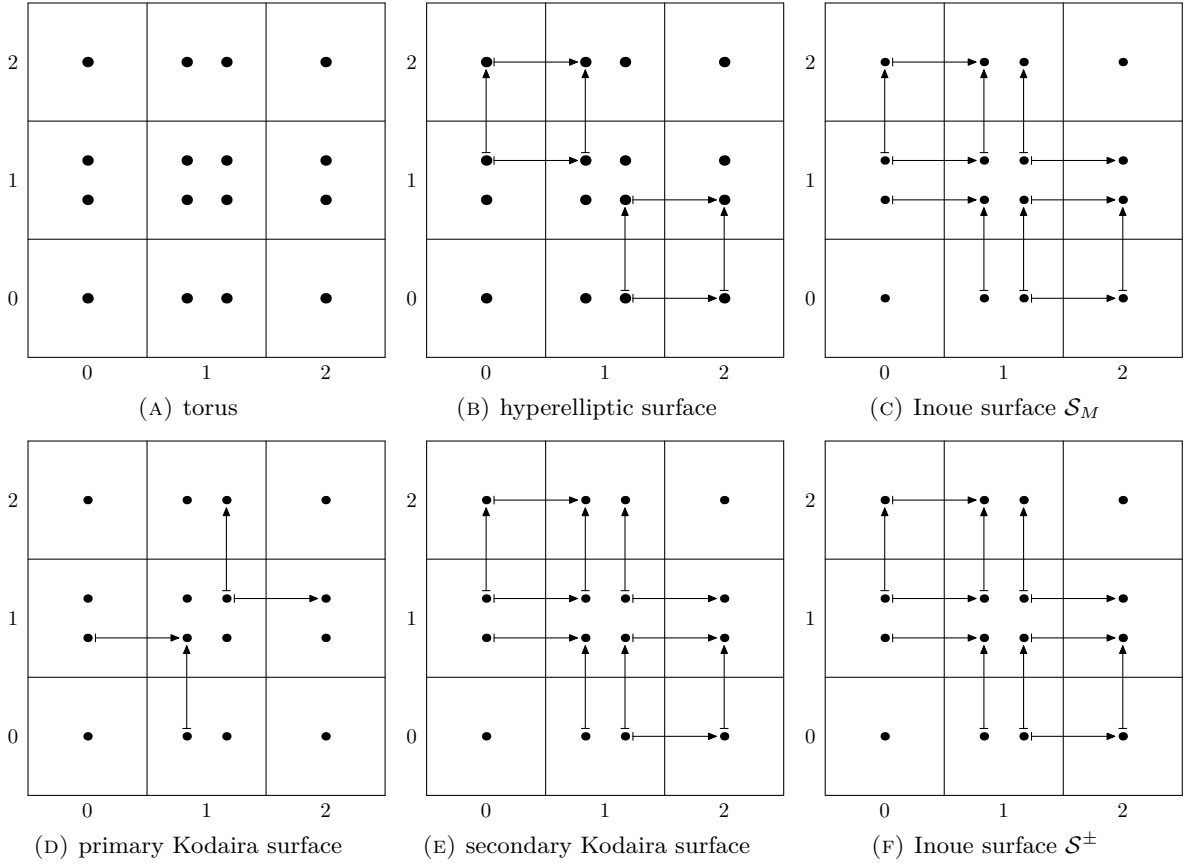

        \centering
        \begin{subfigure}[b]{0.3\textwidth}
                \centering
                 \includegraphics[width=5cm]{./coom.1}
                \caption{torus}
                \label{fig:torus}
        \end{subfigure}
        \quad
        \begin{subfigure}[b]{0.3\textwidth}
                \centering
                \includegraphics[width=5cm]{./coom.2}
                \caption{hyperelliptic surface}
                \label{fig:hyperell}
        \end{subfigure}
	\quad
        \begin{subfigure}[b]{0.3\textwidth}
                \centering
                \includegraphics[width=5cm]{./coom.3}
                \caption{Inoue surface $\mathcal{S}_M$}
                \label{fig:inoue-0}
        \end{subfigure}

	\medskip

        \begin{subfigure}[b]{0.3\textwidth}
                \centering
                 \includegraphics[width=5cm]{./coom.4}
                \caption{primary Kodaira surface}
                \label{fig:prim-kodaira}
        \end{subfigure}
        \quad
        \begin{subfigure}[b]{0.3\textwidth}
                \centering
                \includegraphics[width=5cm]{./coom.5}
                \caption{secondary Kodaira surface}
                \label{fig:sec-kodaira}
        \end{subfigure}
	\quad
        \begin{subfigure}[b]{0.3\textwidth}
                \centering
                \includegraphics[width=5cm]{./coom.6}
                \caption{Inoue surface $\mathcal{S}^\pm$}
                \label{fig:inoue-pm}
        \end{subfigure}
        \caption{The double-complexes of left-invariant forms over $4$-dimensional solvmanifolds.}\label{fig:left-inv}
\end{figure}

\begin{center}
\begin{table}[ht]
 \centering
\resizebox{\textwidth}{!}{
\begin{tabular}{>{$\mathbf\bgroup}l<{\mathbf\egroup$} || >{$}p{.5cm}<{$} >{$}p{.5cm}<{$} >{$}p{.5cm}<{$} >{$}p{.5cm}<{$} | >{$}p{.5cm}<{$} >{$}p{.5cm}<{$} >{$}p{.5cm}<{$} >{$}p{.5cm}<{$} | >{$}p{.5cm}<{$} >{$}p{.5cm}<{$} >{$}p{.5cm}<{$} >{$}p{.5cm}<{$} | >{$}p{.5cm}<{$} >{$}p{.5cm}<{$} >{$}p{.5cm}<{$} >{$}p{.5cm}<{$} | >{$}p{.5cm}<{$} >{$}p{.5cm}<{$} >{$}p{.5cm}<{$} >{$}p{.5cm}<{$} | >{$}p{.5cm}<{$} >{$}p{.5cm}<{$} >{$}p{.5cm}<{$} >{$}p{.5cm}<{$} ||}
\toprule
 & \multicolumn{4}{c|}{(A) torus} & \multicolumn{4}{c|}{(B) hyperell} & \multicolumn{4}{c|}{(C) Inoue $\mathcal{S}_M$} & \multicolumn{4}{c|}{(D) prim Kod} & \multicolumn{4}{c|}{(E) sec Kod} & \multicolumn{4}{c||}{(F) Inoue $\mathcal{S}^\pm$} \\
(p,q) & h^{p,q}_{\delbar} & h^{p,q}_{BC} & b_k & \Delta^k & h^{p,q}_{\delbar} & h^{p,q}_{BC} & b_k & \Delta^k & h^{p,q}_{\delbar} & h^{p,q}_{BC} & b_k & \Delta^k & h^{p,q}_{\delbar} & h^{p,q}_{BC} & b_k & \Delta^k & h^{p,q}_{\delbar} & h^{p,q}_{BC} & b_k & \Delta^k & h^{p,q}_{\delbar} & h^{p,q}_{BC} & b_k & \Delta^k \\
\toprule
(0,0) & 1 & 1 & 1 & 0 & 1 & 1 & 1 & 0 & 1 & 1 & 1 & 0 & 1 & 1 & 1 & 0 & 1 & 1 & 1 & 0 & 1 & 1 & 1 & 0 \\
\midrule[0.02em]
(1,0) & 2 & 2 & \multirow{2}{*}{4} & \multirow{2}{*}{0} & 1 & 1 & \multirow{2}{*}{2} & \multirow{2}{*}{0} & 0 & 0 & \multirow{2}{*}{1} & \multirow{2}{*}{0} & 1 & 1 & \multirow{2}{*}{3}& \multirow{2}{*}{0} & 0 & 0 & \multirow{2}{*}{1} & \multirow{2}{*}{0} & 0 & 0 & \multirow{2}{*}{1} & \multirow{2}{*}{0} \\
(0,1) & 2 & 2 & & & 1 & 1 & & & 1 & 0 & & & 2 & 1 & & & 1 & 0 & & & 1 & 0 & & \\
\midrule[0.02em]
(2,0) & 1 & 1 & \multirow{2}{*}{6} & \multirow{2}{*}{0} & 0 & 0 & \multirow{2}{*}{2} & \multirow{2}{*}{0} & 0 & 0 & \multirow{2}{*}{0} & \multirow{2}{*}{2} & 1 & 1 & \multirow{2}{*}{4} & \multirow{2}{*}{2} & 0 & 0 & \multirow{2}{*}{0} & \multirow{2}{*}{2} & 0 & 0 & \multirow{2}{*}{0} & \multirow{2}{*}{2} \\
(1,1) & 4 & 4 & & & 2 & 2 & & & 0 & 1 & & & 2 & 3 & & & 0 & 1 & & & 0 & 1 & & \\
(0,2) & 1 & 1 & & & 0 & 0 & & & 0 & 0 & & & 1 & 1 & & & 0 & 0 & & & 0 & 0 & & \\
\midrule[0.02em]
(2,1) & 2 & 2 & \multirow{2}{*}{4} & \multirow{2}{*}{0} & 1 & 1 & \multirow{2}{*}{2} & \multirow{2}{*}{0} & 1 & 1 & \multirow{2}{*}{1} & \multirow{2}{*}{0} & 2 & 2 & \multirow{2}{*}{3} & \multirow{2}{*}{0} & 1 & 1 & \multirow{2}{*}{1} & \multirow{2}{*}{0} & 1 & 1 & \multirow{2}{*}{1} & \multirow{2}{*}{0} \\
(1,2) & 2 & 2 & & & 1 & 1 & & & 0 & 1 & & & 1 & 2 & & & 0 & 1 & & & 0 & 1 & & \\
\midrule[0.02em]
(2,2) & 1 & 1 & 1 & 0 & 1 & 1 & 1 & 0 & 1 & 1 & 1 & 0 & 1 & 1 & 1 & 0 & 1 & 1 & 1 & 0 & 1 & 1 & 1 & 0 \\
\bottomrule
\end{tabular}
}
\caption{Summary of the dimensions of de Rham, Dolbeault, and Bott-Chern cohomologies and of the degree of non-K\"ahlerness for compact complex surfaces diffeomorphic to solvmanifolds.}
\label{table:cohom}
\end{table}
\end{center}

\medskip

We prove the following result.

\begin{thm}\label{thm:inj-solvmfds}
Let $X$ be a compact complex surface diffeomorphic to a solvmanifold. Then the natural map $H^{2,1}_{BC}(X)\to H^{2,1}_{\delbar}(X)$ induced by the identity is an isomorphism, and the natural map $H^{2,1}_{BC}(X)\to H^{3}_{dR}(X;\C)$ induced by the identity is injective.
\end{thm}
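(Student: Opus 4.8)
The statement splits into two parts, and the plan is to deduce both from the explicit computation of Theorem~\ref{thm:bc-solvmfds}. For the first assertion, the isomorphism $H^{2,1}_{BC}(X)\to H^{2,1}_{\delbar}(X)$, I would invoke Theorem~\ref{thm:delta1=0}(i): the identity-induced map is injective for \emph{every} compact complex surface, so it suffices to check that source and target have the same finite dimension. Reading off the columns $h^{2,1}_{BC}$ and $h^{2,1}_{\delbar}$ in Table~\ref{table:cohom}, one finds $h^{2,1}_{BC}=h^{2,1}_{\delbar}$ in each of the six cases (A)--(F). An injective linear map between finite-dimensional vector spaces of equal dimension is bijective, which gives the isomorphism.

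For the injectivity of $H^{2,1}_{BC}(X)\to H^{3}_{dR}(X;\C)$, the plan is to reduce to the finite-dimensional subcomplex of left-invariant forms and then verify the claim on explicit representatives. By Theorem~\ref{thm:bc-solvmfds} both $H^{\bullet,\bullet}_{BC}(X)$ and $H^{\bullet}_{dR}(X;\C)$ are computed by left-invariant forms, so the natural map, which sends a Bott--Chern class to its de Rham class, can be read directly off the tabulated representatives. It then remains to show that the representatives of $H^{2,1}_{BC}$ listed in Tables~\ref{table:cohomologies-1} and~\ref{table:cohomologies-2} map to linearly independent classes in $H^{3}_{dR}$, as displayed in Tables~\ref{table:cohomologies-dR-1} and~\ref{table:cohomologies-dR-2}.

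In cases (A) and (B) this is immediate, since the Bott--Chern generators $\varphi^{12\bar1}$ (and, for the torus, also $\varphi^{12\bar2}$) occur literally among the de Rham generators, so their classes are nonzero and independent. The cases (C), (E), (F) are the substantive ones: here $H^{2,1}_{BC}=\langle\varphi^{12\bar1}\rangle$ while $H^{3}_{dR}$ is one-dimensional, generated by a combination such as $[\varphi^{12\bar1}-\varphi^{1\bar1\bar2}]$, so $\varphi^{12\bar1}$ does not visibly coincide with the generator and one must check that $[\varphi^{12\bar1}]_{dR}\neq0$; case (D) is a hybrid, where $\varphi^{12\bar2}$ is a literal generator but $\varphi^{12\bar1}$ again requires an argument and must moreover be shown independent of $[\varphi^{12\bar2}]_{dR}$. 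The key step is to exhibit a left-invariant $(1,1)$-form whose differential is a nonzero multiple of $\varphi^{12\bar1}+\varphi^{1\bar1\bar2}$; for instance, from the structure equations of case (C) one computes $\de\varphi^{1\bar1}=-\im\alpha\,(\varphi^{12\bar1}+\varphi^{1\bar1\bar2})$ with $\alpha\neq0$, so that $\varphi^{12\bar1}+\varphi^{1\bar1\bar2}$ is $\de$-exact, and analogously $\de\varphi^{2\bar2}=\tfrac{\im}{2}\,(\varphi^{12\bar1}+\varphi^{1\bar1\bar2})$ in case (D). Combined with $[\varphi^{12\bar1}-\varphi^{1\bar1\bar2}]$ being the nonzero generator, this yields $[\varphi^{12\bar1}]_{dR}=\tfrac{1}{2}\,[\varphi^{12\bar1}-\varphi^{1\bar1\bar2}]_{dR}\neq0$, and in case (D) this class is manifestly independent of $[\varphi^{12\bar2}]_{dR}$.

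The main obstacle is precisely this last point, carried out case by case: one must identify, from each set of structure equations, an explicit left-invariant two-form whose exterior differential realizes $\varphi^{12\bar1}+\varphi^{1\bar1\bar2}$ (or the appropriate analogue) as a boundary, and then match $[\varphi^{12\bar1}]_{dR}$ against the tabulated generator of $H^{3}_{dR}$. This is most delicate for the Inoue surface $\mathcal{S}^\pm$ of case (F), where the de Rham generator is the four-term combination $[\varphi^{12\bar1}-q\,\varphi^{12\bar2}-\varphi^{1\bar1\bar2}+q\,\varphi^{2\bar1\bar2}]$, so that one must also verify that the forms $\varphi^{12\bar2}$ and $\varphi^{2\bar1\bar2}$ contribute no further obstruction. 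In every case, however, the verification amounts to a direct manipulation of the structure equations recorded before Theorem~\ref{thm:bc-solvmfds}.
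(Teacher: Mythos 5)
Your proposal is correct and follows essentially the same route as the paper: injectivity from Theorem \ref{thm:delta1=0} plus a dimension count from the tables for the first claim, and a reduction to the left-invariant complex with an explicit check that the $(2,1)$ Bott--Chern generator is not $\de$-exact for the second. The paper merely declares the latter ``a straightforward computation from the tables'' and works out only case (C) in a coordinate model on $\mathbb{H}\times\C$, whereas you verify all cases uniformly via the identity that $\varphi^{12\bar1}+\varphi^{1\bar1\bar2}$ is exact while $\varphi^{12\bar1}-\varphi^{1\bar1\bar2}$ generates $H^3_{dR}$ --- a correct and slightly more systematic execution of the same idea.
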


\begin{proof}
By the general result in Theorem \ref{thm:delta1=0}, the natural map $H^{2,1}_{BC}(X)\to H^{2,1}_{\delbar}(X)$ is injective. In fact, it is an isomorphism as follows from the computations summarized in Tables \ref{table:cohomologies-1} and \ref{table:cohomologies-2}.
As for the injectivity of the natural map $H^{2,1}_{BC}(X)\to H^{3}_{dR}(X;\C)$, it is a straightforward computation from Tables \ref{table:cohomologies-1} and \ref{table:cohomologies-2} and Tables \ref{table:cohomologies-dR-1} and \ref{table:cohomologies-dR-2}.

As an example, we offer an explicit calculation of the injectivity of the map $H^{2,1}_{BC}(X)\to H^{3}_{dR}(X;\C)$ for the Inoue surfaces of type $0$, see \cite{inoue}, see also \cite{tricerri}. We will change a little bit the notation. Recall the construction of Inoue surfaces: let $M\in \mathrm{SL}(3;\Z)$ be a unimodular matrix having a real eigenvalue $\lambda >1$ and two complex 
eigenvalues $\mu\neq\overline{\mu}$. Take a real eigenvector $(\alpha_1,\alpha_2,\alpha_3)$ and an eigenvector 
$(\beta_1,\beta_2,\beta_3)$ of $M$. Let $\mathbb{H}=\{z\in\C\,\,\,\vert\,\,\, \Im\mathfrak{m}\,z>0\}$; on the product $\mathbb{H}\times \C$ consider
the following transformations defined as 
\begin{eqnarray*}
f_0(z,w) &:=& (\lambda z,\, \mu w)\\
f_j(z,w) &:=& (z+\alpha_j,w+\beta_j) \quad \text{ for } \quad j \in \{1,2,3\} \;.
\end{eqnarray*} 
Denote by $\Gamma_M$ the group generated by $f_0,\ldots,f_3$; then $\Gamma_M$ acts in a properly discontinuous way and without fixed points on $\mathbb{H}\times \C$, and $\mathcal{S}_{M} := \mathbb{H}\times \C\slash \Gamma_M$ is an Inoue surface of type $0$, as in case (C) in \cite{hasegawa-jsg}. Denoting by $z=x+\im y$ and $w=u+\im v$, consider the following differential forms on $\mathbb{H}\times\mathbb{C}$:
$$ e^1 \;:=\; \frac{1}{y}\, \de x \;, \quad e^2 \;:=\; \frac{1}{y}\, \de y \;, \quad e^3 \;:=\; \sqrt{y}\, \de u \;, \quad e^4 \;:=\; \sqrt{y}\, \de v \;. $$
(Note that $e^1$ and $e^2$, and $e^3\wedge e^4$ are $\Gamma_M$-invariant, and consequently they induce global differential forms on $\mathcal{S}_{M}$.) We obtain
$$ \de e^1 \;=\; e^1\wedge e^2 \;, \quad \de e^2 \;=\; 0 \;, \quad \de e^3 \;=\; \frac{1}{2}\, e^2 \wedge e^3 \;, \quad \de e^4 \;=\; \frac{1}{2}\, e^2\wedge e^4 \;. $$
Consider the natural complex structure on $\mathcal{S}_{M}$ induced by $\mathbb{H}\times\C$. Locally, we have
$$ J e^1 = - e^2 \quad \text{ and } \quad J e^2 = e^1 \quad \text{ and } \quad  J e^3 = -e^4 \quad \text{ and } \quad J e^4 = e^3 \;. $$
Considering the $\Gamma_M$-invariant $(2,1)$-Bott-Chern cohomology of $\mathcal{S}_{M}$, we obtain that
$$ H^{2,1}_{BC}(\mathcal{S}_{M}) \;=\; \C\left\langle \left[ e^1\wedge e^3\wedge e^4 + \im e^2\wedge e^3\wedge e^4 \right] \right\rangle \;. $$
Clearly $\delbar \left( e^1\wedge e^3\wedge e^4 + \im e^2\wedge e^3\wedge e^4 \right) = 0$ and $e^1\wedge e^3\wedge e^4 + \im e^2\wedge e^3\wedge e^4 = e^{1} \wedge e^3 \wedge e^4 + \im\, \de \left( e^3 \wedge e^4 \right)$, therefore the de Rham cohomology class $\left[ e^1\wedge e^3\wedge e^4 + \im e^2\wedge e^3\wedge e^4 \right] = \left[ e^1 \wedge e^3 \wedge e^4 \right] \in H^3_{dR}(\mathcal{S}_{M})$ is non-zero.
\end{proof}

\end{document}